\newtheorem{Lemma}{Lemma}[section]
\newtheorem{Remark}[Lemma]{Remark}
\newtheorem{Theorem}[Lemma]{Theorem}
\newtheorem{Proposition}[Lemma]{Proposition}
\numberwithin{equation}{section}
\def\qS{\mathscr{S}}
\def\C{\mathbb{C}}
\def\Hor{\mathrm{Hor}}
\def\dvol{\mathrm{dvol}}
\def\id{\mathrm{id}}
\def\Ker{\mathrm{Ker}}
\def\Im{\mathrm{Im}}
\def\YM{\mathrm{YM}}
\def\inv{\mathrm{inv}}
\def\H{\mathrm{Hor}}
\def\Mor{\textsc{Mor}}
\def\R{\mathbb{R}}
\def\Z{\mathbb{Z}}
\def\C{\mathbb{C}}
\def\l{\mathrm{L}}
\def\ad{\mathrm{ad}}
\def\Ad{\mathrm{Ad}}
\def\r{\mathrm{R}}
\begin{document}
\date{\today}
\title{Classical Electromagnetic Field Theory with Quantum Principal Bundles}
\author{Gustavo Amilcar Salda\~na Moncada}
\address{Gustavo Amilcar Salda\~na Moncada\\
CIMAT, Unidad Guanajuato}
\email{gamilcar@ciencias.unam.mx}
\begin{abstract}
The aim of this paper is to formulate a {\it non--commutative geometrical} version of the classical electromagnetic field theory in the vacuum with the Moyal--Weyl algebra as the space--time by using the theory of quantum principal bundles and quantum principal connections. As a result we will present the correct Maxwell equations in the vacuum of the model, in which we can appreciate the existence of electric and magnetic charges and currents. Finally, in the fourth section we are going to present a {\it mathematical model} for which there are instantons that are not solutions of the corresponding Yang--Mills equation. 

 \begin{center}
  \parbox{300pt}{\textit{MSC 2010:}\ 46L87, 58B99.}
  \\[5pt]
  \parbox{300pt}{\textit{Keywords:}\ Quantum principal bundles, quantum principal connections, Maxwell equations, instantons.}
 \end{center}
\end{abstract}
\maketitle
\section{Introduction}
It is well--known the relationship between geometry and physics; particularly when we deal with the electromagnetic theory \cite{na}. Indeed, one of the most general starting points of this theory in the vacuum is to consider it as a Yang--Mills theory for the trivial principal $U(1)$--bundle over the Minkowski space--time: $\R^4$ with the metric $\eta=\mathrm{diag}(1,-1,-1,-1)$ \cite{na}. 

In Appendix A we will show a brief summary of the classical electromagnetic field theory in the vacuum. In particular, we comment how the (second) Bianchi identity (see equation \ref{1.2}) gives rise to the Gauss Law for magnetism (equation \ref{1.7}) and the Faraday equation (see equation \ref{1.8}); while  critical points of the Yang--Mills functional (see equation \ref{1.3}), the functional that measures the square norm of the curvature of a principal connection, gives rise to the Gauss Law (equation \ref{1.9}) and the Ampere equation (equation \ref{1.10}).

It is worth remarking that in equation \ref{1.3} (or equivalently the equations \ref{1.9}, \ref{1.10}) we are looking for critical points of the Yang--Mills functional, so in principle, not every principal connection (or equivalently, not every $1$--form potential) can satisfy it. The equation \ref{1.3} is usually called the {\it dynamical equation} because it rules the dynamics of the electric and the magnetic field.

On the other side, equation \ref{1.2} (or equivalently the equations \ref{1.7}, \ref{1.8}) is always satisfied for every potential $1$--form (or equivalently, for every principal connection). This equation can be thought as a condition imposed on the electromagnetic field by the Minkowski space--time and the group $U(1)$. So the equation \ref{1.2} is usually called the {\it geometrical (or topological) equation}, reflecting the fact that it comes from the Bianchi equation (and the topology of the bundle). 

This paper aims to show two goals. The first one is using the general theory presented in \cite{sald1,sald2} to recreate the geometrical formulation of the classical electromagnetic field theory showed in the Appendix A by using quantum principal bundles and quantum principal connections in a concrete non--commutative space--time: the Moyal--Weyl algebra \cite{05}. Of course, in the text we will show the link between our formulation with the common results about the electromagnetic theory in non--commutative geometry, for example in \cite{u(1),elec,gauge2,twisted,ruiz}.  In particular, we are going to present the correct geometrical (topological) equation by using the non--commutative Bianchi identity, and the correct dynamical equation by identifying critical points of the non--commutative Yang--Mills functional, which is now the functional that measures the square norm of the curvature of a quantum principal connection (qpc). With that we will present the correct Maxwell equations in the vacuum as well as the Lagrangian of the system. An interesting result of this formulation is the fact that the four--potential produces by self--interaction {\it electric charges and currents densities in the vacuum and  magnetic charges and currents densities in the vacuum too}. As far as we know, as of now these charge/current terms in the vacuum are not taken into account in the literature \cite{u(1),elec,gauge2,twisted,ruiz}.

The second goal of this paper consists in using the general theory presented in \cite{sald1,sald2} to create a {\it mathematical model} of the classical electromagnetic field theory in the Moyal--Weyl algebra such that there exist instantons that are not solutions of the corresponding Yang--Mills equation. To create this model we retake the Cabibbo--Ferrari's idea (\cite{cf,cv}), which consists in having two gauge fields associated to the electromagnetic interaction: an {\it electric photon field} and an  {\it magnetic photon field} in order to have full symmetry between the electric and the magnetic field. Nevertheless, in the {\it classical} case the existence of two gauge fields leads to a theory with $U(1)\times U(1)$ as gauge group (\cite{cf}); while in our model the gauge group is still $U(1)$. This is only a {\it mathematical model} in the sense that it does not represent the physical word because there are not two gauge fields associated to the electromagnetic interaction. However this model opens the possibility to find more realistic models for which there are instantons that are not solutions of the Yang--Mills equation.

The importance of this paper lies, not only in its results in the Moyal--Weyl algebra (which is one of the most studied spaces for quantum gravity in the framework of non--commutative geometry), but also in the generality of the formulation because with that we have been able to recreate previous results and go further by explaining the mathematical reasons of the terms in the theory. This should be not surprising given the relationship between principal bundles and gauge theory in the {\it classical} case.  It is worth mentioning that it is possible to use the general theory of \cite{sald1,sald2} to create non--commutative gauge theories for other compact Lie groups, like $SU(2)$ or $SU(3)$. 

This paper breaks down into five sections. Following this introduction, in the second section we are going to build the quantum principal $U(1)$--bundle used. In the third section we will present the non--commutative Maxwell equations for our model by only using the geometry of the spaces and in this section we will show the covariant formulation of our model, as well as some non--trivial solutions. In addition, in the fourth section we are going to present the model for which not every instanton is a solution of the corresponding Yang--Mills equation. The last section is for some concluding comments and just like we have mentioned before, in Appendix A there is a brief summary of the geometrical formulation of the electromagnetic field theory in the vacuum in the {\it classical} case.

 To accomplish our purposes, we are going to use M. Durdevich's theory of quantum principal bundles and quantum principal connection (\cite{micho1,micho2,micho3,steve}) and the general formulation of the Yang--Mills theory presented in \cite{sald1,sald2} which has been tested in other quantum bundles with several exciting and interesting results, as in \cite{sald3,sald4}. It is worth remarking that the theory showed in \cite{sald1,sald2} was formulated in the most general way. Thus it was not created for the particular case of the quantum bundle used in this paper. 

 We have chosen Durdevich's framework to develop this paper instead of the framework presented in \cite{libro,brz,qvbH} because of its purely geometrical/algebraic formulation and because of its generality in terms of differential calculus.  

In Non--Commutative Geometry it is common to use the word {\it quantum} as synonymous of {\it non--commutative} and we will use it in the paper sometimes. On the other hand, in physics it is common to use the word {\it non--commutative} to denote gauge theories with non--abelian groups, which is not the case of this paper. So we expect that the reader does not confuse with all these terms.

\section{The Trivial Quantum Principal $U(1)$--Bundle}

In this section we are going to present the quantum principal bundle in which we will work. Since in the {\it classical} case it is used a trivial principal bundle to develop the electromagnetic field theory in the vacuum and we are interested only in changing the geometry to a non-commutative one (we are not interested in changing the {\it topology} of the bundle), we have to consider a trivial quantum principal $U(1)$--bundle. The general theory of this kind of quantum bundles can be checked in \cite{micho2,steve}.

\subsection{A Non--Commutative Minkowski Space--Time}

Let us start by considering the Minkowski space--time $(\R^4,\eta=\mathrm{diag}(1,-1,-1,-1))$ and its space of complex--valued smooth functions $C^\infty_\C(\R^4)$. By choosing a $4\times 4$ antisymmetric matrix $(\theta^{\mu\nu})$ $\in$ $M_4(\R)$, it is possible to take $C^\infty_\C(\R^4)[[\theta^{\mu\nu}]]$ the formal power series in $\theta^{\mu\nu}$ with coefficients in $C^\infty_\C(\R^4)$. Finally it is possible to apply a $\theta^{\mu\nu}$--twist on $C^\infty_\C(\R^4)[[\theta^{\mu\nu}]]$ by defining a new product. For every $f$, $h$ $\in$ $C^\infty_\C(\R^4)[[\theta^{\mu\nu}]]$ we define

\begin{equation}
    \label{2.1}
    f\cdot h:=m\circ \mathrm{exp}\left( \frac{i\,\theta^{\mu\nu}}{2} \frac{\partial}{\partial x^\mu}\otimes \frac{\partial}{\partial x^\nu} \right)(f\otimes h),
\end{equation}
where $m$ denotes the usual product on $C^\infty_\C(\R^4)[[\theta^{\mu\nu}]]$ and we have used Einstein summation convention with $i=\sqrt{-1}$. Explicitly we have
$$f\cdot h:= \left( fh +  \frac{i\,\theta^{\mu\nu}}{2} \frac{\partial f}{\partial x^\mu} \frac{\partial h}{\partial x^\nu}+ \frac{i^2\,\theta^{\mu\nu}\, \theta^{\alpha\beta}}{8} \frac{\partial f}{\partial x^\alpha \partial x^\mu} \frac{\partial h}{\partial x^\beta \partial x^\nu}+\cdots \right).$$  With this new product, $C^\infty_\C(\R^4)[[\theta^{\mu\nu}]]$ is a  non--commutative unital $\ast$--algebra (\cite{05}), where the unit is $\mathbbm{1}(x)=1$ for all $x$ and the $\ast$ operation is the complex conjugate.

It is worth mentioning that 
\begin{equation}
    \label{2.2}
    [x^\mu,x^\nu]:=x^\mu\cdot x^\nu-x^\nu \cdot x^\mu =i\,\theta^{\mu\nu}\,\mathbbm{1}.
\end{equation}
This non--commutative unital $\ast$--algebra is given the name of Moyal--Weyl algebra (\cite{05}) and it will be our quantum space--time, which we are going to denote by $B$. 

The next step is to extend the Moyal product $\cdot$ to the space of differential forms \cite{05}. In fact, let us take $\Omega^\bullet_\C(\R^4)[[\theta^{\mu\nu}]]$ the space of formal power series in $\theta^{\mu\nu}$ with coefficient in the algebra of complex--valued differential forms. Now equation \ref{2.1} is easily extended to $\Omega^\bullet_\C(\R^4)[[\theta^{\mu\nu}]]$ by considering the action of $ \frac{\partial}{\partial x^\gamma}$ on forms by means of the Lie derivative.  We will be denoted by $\Omega^\bullet(B)$ the space $\Omega^\bullet_\C(\R^4)[[\theta^{\mu\nu}]]$ with the Moyal product $\cdot$ extended. This graded differential $\ast$--algebra is going to play the role of {\it quantum differential forms} in  $B$.

In accordance with \cite{sald2}, in order to apply its theory there are certain structures that we have to define on $\Omega^\bullet(B)$.

Let us define the following left quantum Pseudo--Riemmanian metric on $B$
\begin{equation}
\label{2.3}
\{\langle-,-\rangle^k_\l: \Omega^k(B)\times \Omega^k(B) \longrightarrow B\mid k=0,1,2,3,4\}
\end{equation}
such that $\langle f ,h\rangle^0_\l= f\cdot h^\ast$ and for $k=1,2,3,4$ it is the usual metric of the de Rham differential algebra of the Minkowski space--time. For example $$\displaystyle \langle \sum^3_{\mu=0}f_\mu\,dx^\mu ,\sum^3_{\nu=0}h_\nu\,dx^\nu\rangle^1_\l= \sum^3_{\mu,\nu=0}\eta^{\mu\,\nu} f_\mu\cdot h^\ast_\nu=f_0\cdot h^\ast_0-f_1\cdot h^\ast_1-f_2\cdot h^\ast_2-f_3\cdot h^\ast_3$$ and $$\langle f\,\dvol,h\,\dvol\rangle^4_\l=f\cdot h^\ast,$$ where 
\begin{equation}
    \label{2.4}
    \dvol:=dx^0\wedge dx^1\wedge dx^2\wedge dx^3
\end{equation}
is the volume form. Furthermore, by postulating the orthogonality between forms of different degrees, we can induce a Pseudo--Riemannian structure in the whole graded space. So we will not use superscripts anymore. 

By taking the usual integral operator on the Moyal--Weyl algebra  $$\int_{B}\dvol$$ we can define the left quantum Hodge pseudo inner product
\begin{equation}
    \label{2.5}
     \langle-|-\rangle_\l:= \int_{B} \langle-,-\rangle_\l \,\dvol.
\end{equation}
Furthermore and according to \cite{sald2}, the left quantum Hodge operator is the antilinear $B$--isomorphism 
\begin{equation}
    \label{2.6}
     \star_\l:\Omega^k(B)\longrightarrow \Omega^{4-k}(B)
\end{equation}
that satisfies $$\hat{\mu}\cdot (\star_\l \mu) =\langle \hat{\mu},\mu\rangle_\l \,\dvol.$$ Explicitly, for the canonical basis we get
\begin{equation}
    \label{2.7}
    \star_\l\mathbbm{1}=\dvol, \;\;\star_\l \dvol=-\mathbbm{1},
\end{equation}
\begin{equation}
    \label{2.8}
    \begin{aligned}
    \star_\l dx^0=dx^1\wedge dx^2\wedge dx^3, \quad \star_\l dx^1=dx^0\wedge dx^2\wedge dx^3,\\
    \star_\l dx^2=-dx^0\wedge dx^1\wedge dx^3,\quad \star_\l dx^3=dx^0\wedge dx^1\wedge dx^2,
    \end{aligned}
\end{equation}
\begin{equation}
    \label{2.9}
    \begin{aligned}
    \star_\l dx^0 \wedge dx^1=-dx^2\wedge dx^3, \quad \star_\l dx^0 \wedge dx^2=dx^1\wedge dx^3, \quad \star_\l dx^0 \wedge dx^3=-dx^1\wedge dx^2,\\
    \star_\l dx^1 \wedge dx^2=dx^0\wedge dx^3, \quad \star_\l dx^1 \wedge dx^3=-dx^0\wedge dx^2, \quad \star_\l dx^2 \wedge dx^3=dx^0\wedge dx^1,
    \end{aligned}
\end{equation}
\begin{equation}
    \label{2.10}
    \begin{aligned}
    \star_\l dx^1 \wedge dx^2\wedge dx^3 =dx^0, \quad \star_\l dx^0 \wedge dx^2 \wedge dx^3=dx^1,\\
    \star_\l dx^0 \wedge dx^1 \wedge dx^3=-dx^2, \quad \star_\l dx^0 \wedge dx^1 \wedge dx^2=dx^3.
    \end{aligned}
\end{equation}
Here $$d:\Omega^k(B)\longrightarrow \Omega^{k+1}(B) $$ is the differential operator and of course, in all cases we have $\star^2_\l=(-1)^{k(4-k)+1}\,\id.$  Finally the left quantum codifferential is defined as the operator
\begin{equation}
    \label{2.11}
    d^{\star_\l}:=(-1)^{k+1}\,\star^{-1}_\l\,\circ\, d \,\circ\, \star_\l:\Omega^{k+1}(B)\longrightarrow \Omega^k(B),
\end{equation}
and it is the formal adjoint operator of $d$ with respect to $\langle-|-\rangle_\l$ \cite{sald2}.

It is worth mentioning that by considering the right structure as 
\begin{equation}
    \label{2.12}
    \langle \hat{\mu},\mu\rangle_\r:=\langle \hat{\mu}^\ast,\mu^\ast\rangle_\l
\end{equation}
we get $\langle-|-\rangle_\r$, $\star_\r:=\ast \circ \star_\l \circ \ast$ and $d^{\star_\r}:=(-1)^{k+1}\,\star^{-1}_\r\,\circ\, d \,\circ\, \star_\r=\ast \circ d^{\star_\l} \circ \ast$, which is the formal adjoint operator of $d$ with respect to $\langle-|-\rangle_\r$ \cite{sald2}.

\subsection{The Quantum Group of $U(1)$ and its Differential calculus}

Let us start this subsection by considering the $\ast$--Hopf algebra of the polynomial Laurent algebra $$(H=\C[z,z^{-1}],\Delta,\epsilon,S),$$ where $z^{-1}=z^\ast$, $\Delta$ is the coproduct, $\epsilon$ is the counit and $S$ the coinverse also called the antipode. These operations are given by $$\Delta(z^n)=z^n\otimes z^n,\qquad \epsilon(z^n)=1,\qquad S(z^n)=z^{n\ast}.$$ The space $H$ will play the role of the quantum structure group of our non--commutative bundle.

The next step is to find a differential calculus on $H$ different from the well--known algebra of {\it classical} differential forms. The reasons of this change will be discussed in the Remark \ref{rema2}. In accordance with \cite{steve,woro}, a bicovariant $\ast$--First Order Differential calculus ($\ast$--FODC) can be defined by an $\Ad$--invariant right ideal $\mathcal{R} \subseteq \Ker(\epsilon)$ such that $S(\mathcal{R})^\ast\subseteq \mathcal{R}$, where $\Ad:H\longrightarrow H\otimes H$ is the adjoint right coaction on $H$. In this way, let us consider any $\ast$--FODC
\begin{equation}
\label{2.13}
    (\Gamma,d)
\end{equation}
such that the set of invariant elements, or the {\it quantum (dual) Lie algebra} ${_\inv}\Gamma$ satisfies
$$\mathrm{dim}_\C({_\inv}\Gamma)=1,\qquad z\,\pi(z)=-\pi(z)\,z,$$ where $$\pi:H\longrightarrow {_\inv}\Gamma$$ is the quantum germs map given by $$\pi(g)=\kappa(g^{(1)})dg^{(2)}$$ for all $g$ $\in$ $H$ with $\Delta(g)=g^{(1)}\otimes g^{(2)}$ (in Sweedler's notation) \cite{steve,woro}. Notice 
\begin{equation}
\label{2.14}
    {_\inv}\Gamma=\mathrm{span}_\C\{\vartheta:=i\, \pi(z)\}.
\end{equation}
Also we can calculate the adjoint coaction of $H$ on ${_{\inv}}\Gamma$
\begin{equation}
    \label{2.15}
    \begin{aligned}
\ad: {_{\inv}}\Gamma &\longrightarrow {_{\inv}}\Gamma\otimes H\\
\vartheta &\longmapsto \vartheta\otimes \mathbbm{1}
\end{aligned}
\end{equation}
because of $$\ad(\pi(g))=((\pi\otimes \id)\circ \Ad)(g)=\pi(g)\otimes \mathbbm{1}$$ for all $g$ $\in$ $H$ \cite{micho1,micho2,steve}. 

In Durdevich's framework of quantum principal bundles we have to take the universal differential envelope $\ast$--calculus 
\cite{micho1,micho2,steve}
\begin{equation}
\label{2.16}
    (\Gamma^\wedge,d,\ast).
\end{equation}

This graded differential $\ast$--algebra will play the role of {\it quantum differential forms} of $U(1)$ and it has the particularity that, for example $$\Gamma^{\wedge\,0}=H,\qquad \Gamma^{\wedge\,1}=\Gamma=H\otimes {_\inv}\Gamma,\qquad \Gamma^{\wedge\,2}\cong H\otimes{_{\inv}}\Gamma^{\wedge\,2}$$ where 
\begin{equation}
\label{2.17}
    {_{\inv}}\Gamma^{\wedge\,2}=\mathrm{span}_\C\{ \vartheta\,\vartheta\}.
\end{equation}
So there are quantum differential forms of grade $2$. Moreover, there is not a top grade, which is a big difference from the algebra of {\it classical} differential forms of $U(1)$.

The main reason to use the universal differential envelope $\ast$--calculus instead of, for example, the universal differential calculus is the fact that $(\Gamma^\wedge,d,\ast)$ allows to extend the structure of $\ast$--Hopf algebra to any grade and it is maximal with this property (although sometimes both differential calculus agree); reflecting the {\it classical} fact the tangent bundle of a Lie group is also a Lie Group. The structure of graded differential $\ast$--Hopf algebra will be denoted by the same symbols.

In this specific case we have (\cite{micho1,micho2,steve})
\begin{equation}
    \label{2.18}
    \vartheta^\ast=\vartheta,\qquad d\vartheta=i\vartheta\,\vartheta.
\end{equation}

\subsection{The Quantum Bundle}

Finally we have all the ingredients to build the trivial quantum bundle that we will use in the rest of this section and the next one. Let
\begin{equation}
    \label{2.29} 
    \zeta=(P:=B\otimes H, B, \Delta_P:=\id\otimes \Delta)
\end{equation}
be the trivial quantum principal $U(1)$--bundle over $B$ \cite{micho2,steve}. Now we can also take the trivial differential calculus on the bundle (\cite{micho2,steve}):
\begin{equation}
    \label{2.30}
    \Omega^\bullet(P):=\Omega^\bullet(B)\otimes \Gamma^\wedge, \;\;\Delta_{\Omega^\bullet(P)}:=\id\otimes \Delta:\Omega^\bullet(P)\longrightarrow \Omega^\bullet(P)\otimes \Gamma^\wedge
\end{equation}
(here $\Delta$ is the extension of the coproduct in $\Gamma^\wedge$). The graded differential $\ast$--algebra $\Omega^\bullet(P)$ will play the role of {\it quantum differential forms} on the total quantum space $P$. It is worth mentioning that 
\begin{equation}
    \label{2.31}
    \Hor^\bullet P:=\Omega^\bullet(B)\otimes H
\end{equation}
and of course the graded differential $\ast$--subalgebra of forms on the base quantum space matches with $\Omega^\bullet(B)$. In accordance with the general theory, the restriction of $\Delta_{\Omega^\bullet(P)}$ in $ \Hor^\bullet P$ is a corepresentation of $H$ on $ \Hor^\bullet P$ \cite{micho2,steve}. This map will be denoted by $$\Delta_\H: \Hor^\bullet P\longrightarrow \Hor^\bullet P\otimes H$$

In light of \cite{micho2,steve}, the set of quantum principal connections (qpcs)
\begin{equation}
    \label{2.32}
    \{\omega: {_{\inv}}\Gamma\longrightarrow \Omega^1(P)\mid \omega \mbox{ is linear and }\Delta_{\Omega^\bullet(P)}(\omega(\theta))=(\omega\otimes \id_H)\circ \ad(\theta)+\mathbbm{1}\otimes \theta \mbox{ for all }\theta\}
\end{equation}
on trivial quantum bundles is in bijection with the space of {\it non--commutative gauge potentials}
\begin{equation}
    \label{2.33}
    \{A^\omega: {_{\inv}}\Gamma\longrightarrow \Omega^1(B)\mid A^\omega  \mbox{ is linear}  \}
\end{equation}
by means of $$\omega(\theta)=A^\omega(\theta)\otimes \mathbbm{1}+\mathbbm{1}\otimes \theta$$ for all $\theta$ $\in$ ${_{\inv}}\Gamma$. For $A^\omega=0$ the corresponding qpc is called the trivial qpc. 

Now we are going to talk about the curvature of a qpc. In Durdevich framework  it is necessary to choose a embedded differential (\cite{micho2,steve}) $$\Theta:{_{\inv}}\Gamma\longrightarrow {_{\inv}}\Gamma\otimes {_{\inv}}\Gamma.$$ Such maps have to satisfy $$M\circ (\ad\otimes\ad) \circ \Theta=(\Theta\otimes \id_H)\circ \Theta,\quad \Theta(\theta)=\theta^{(1)}\otimes \theta^{(2)}\quad \mbox{ and }  \quad \Theta(\theta^\ast)=-\theta^{(2)\,\ast}\otimes\theta^{(1)\,\ast}$$ if $d\theta=\theta^{(1)}\theta^{(2)}$, where $M(\theta_1,g_1,\theta_2,g_2)=(\theta_1,\theta_2,g_1g_2)$ for every $\theta_1$, $\theta_2$ $\in$ ${_{\inv}}\Gamma$ and every $g_1$, $g_2$ $\in$ $H$. In other words, we 
choose  $\Theta$ in a way compatible with  the differential structure for embedding ${_\inv}\Gamma$ into ${_\inv}\Gamma\otimes {_\inv}\Gamma$ (\cite{sald2}).

In our case, by the equations \ref{2.14}, \ref{2.18} there is {\it only one} embedded differential 
\begin{equation}
    \label{2.34}
    \Theta: {_{\inv}}\Gamma\longrightarrow {_{\inv}}\Gamma\otimes {_{\inv}}\Gamma
\end{equation}
which is given by $$\Theta(\vartheta)=i\,\vartheta\otimes \vartheta.$$ With this the space of curvatures of qpcs
\begin{equation}
    \label{2.35}
    \{R^\omega: {_{\inv}}\Gamma\longrightarrow \Omega^2(P) \mid R^\omega:=d\omega-\langle \omega,\omega\rangle \}
\end{equation}
is in bijection with the space of {\it non--commutative fields strength}
\begin{equation}
    \label{2.36}
    \{F^\omega:=dA^\omega-\langle A^\omega,A^\omega \rangle: {_{\inv}}\Gamma\longrightarrow \Omega^2(B)\}
\end{equation}
by means of (\cite{micho2,steve}) $$R^\omega(\theta)=(F^\omega\otimes \id_H)\circ \ad(\theta)$$ for all $\theta$ $\in$ ${_\inv}\Gamma$. Here we are considering that for a given algebra $X$ $$\langle C,D \rangle:=m\circ (C\otimes D)\circ \Theta$$ with $m$ the product map and $C,D:{_\inv}\Gamma \longrightarrow X$ are linear maps. The reader can compare this relation between qpcs and their curvatures in terms of potentials $$A^\omega \longleftrightarrow F^\omega=dA^\omega-\langle A^\omega,A^\omega \rangle$$ with its classical counterpart $$A^\omega \longleftrightarrow F^\omega=dA^\omega.$$ Of course, in general, the curvature of a qpc depends on the choice of $\Theta$. However, this is not our case since there is only one embedded differential. It is worth mentioning that the embedded differential map $\Theta$ in the definition of the curvature of a qpc allows seeing it as an element of $$\Mor(\ad, \Delta_\H)=\{ \tau:{_\inv}\Gamma \longrightarrow \Hor^\bullet P \mid \tau \mbox{ is a linear map such that } (\tau\otimes \id_H)\circ\ad=\Delta_\H \circ \tau\}$$ in the general case. \cite{micho2,steve}.

Following the {\it classical} case, let us consider the non--commutative gauge potential $A^\omega$ given by
\begin{equation}
    \label{2.37}
    A^\omega(\vartheta)=\phi\, dx^0-A_1\,dx^1-A_2\,dx^2-A_3\,dx^3.
\end{equation}
In this way, the non--commutative field strength is defined by
\begin{equation}
    \label{2.38}
    F^\omega(\vartheta)=dA^\omega(\vartheta)-i A^\omega(\vartheta)\wedge A^\omega(\vartheta)
\end{equation}
and in terms of coordinates we have 
\begin{equation}
    \label{2.39}
    F^\omega(\vartheta)=\sum_{0\leq \mu < \nu \leq 3}F_{\mu\nu}\, dx^\mu\wedge dx^\nu\;\; \mathrm{ where }\;\; (F_{\mu\nu})=\begin{pmatrix}
0 & D_1 & D_2 & D_3  \\
-D_1 & 0 & -H_3 & H_2 \\
-D_2 & H_3 & 0 & -H_1\\
-D_3 & -H_2 & H_1 & 0
\end{pmatrix},
\end{equation}
where
\begin{equation}
    \label{2.40}
    {\bf D}=(D_1,D_2,D_3):={\bf E}+i [\phi,{\bf A}],\qquad {\bf E}:=-{\partial {\bf A}\over \partial x^0}-\nabla \phi
\end{equation}
and
\begin{equation}
    \label{2.41}
    {\bf H}=(H_1,H_2,H_3):={\bf B}+ i {\bf A}\times {\bf A},\qquad {\bf B}:=\nabla \times {\bf A},
\end{equation}
with ${\bf A}=(A_1,A_2,A_3)$. The definition of the commutators can be deduced from the context and we have considered that in the cross product the multiplication of elements always starts from top to bottom.

The field ${\bf D}$, ${\bf H}$ will be considered as the non--commutative electric field and the non--commutative magnetic field, respectively. Also $F_{\mu\nu}$ will be called the non--commutative electromagnetic tensor field. It is worth noticing that ${\bf D}$, ${\bf H}$ have a {\it classical} part (${\bf E}$, ${\bf B}$) and a {\it quantum} part ($i [\phi,{\bf A}]$, $i {\bf A}\times {\bf A}$) which comes from $\Theta\not=0$ in the definition of the curvature.

\begin{Proposition}
    \label{cov}
    The covariant derivative $D^\omega$ of an element of $\Mor(\ad, \Delta_\H)$ is just the differential $d$ of the element, for every qpc $\omega$.
\end{Proposition}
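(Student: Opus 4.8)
The plan is to first determine the general form of an element $\tau\in\Mor(\ad,\Delta_\H)$, and then to observe that the differential of such an element is automatically horizontal, so that the connection-dependent part of $D^\omega$ drops out.

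First I would exploit the intertwining condition $(\tau\otimes\id_H)\circ\ad=\Delta_\H\circ\tau$ together with the explicit adjoint coaction $\ad(\vartheta)=\vartheta\otimes\mathbbm{1}$ of (\ref{2.15}). Writing $\tau(\vartheta)=\sum_n\mu_n\otimes z^n$ with $\mu_n\in\Omega^\bullet(B)$ and using $\Delta_\H(\mu_n\otimes z^n)=(\mu_n\otimes z^n)\otimes z^n$, the morphism condition becomes $\sum_n(\mu_n\otimes z^n)\otimes\mathbbm{1}=\sum_n(\mu_n\otimes z^n)\otimes z^n$. Comparing the last tensor leg and using the linear independence of the monomials $z^n$ in $H$, this forces $\mu_n=0$ for $n\neq 0$. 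Hence every $\tau\in\Mor(\ad,\Delta_\H)$ has the form $\tau(\vartheta)=\mu\otimes\mathbbm{1}$ with $\mu\in\Omega^\bullet(B)$; that is, its image lies in $\Omega^\bullet(B)\otimes\mathbbm{1}\subseteq\Hor^\bullet P$, which mirrors the classically trivial adjoint action of the abelian group $U(1)$.

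Next I would recall from \cite{sald2} that, for a qpc $\omega$, the covariant derivative on $\Hor^\bullet P$ is the horizontal projection of the exterior differential, $D^\omega=h_P\circ d$, where $h_P$ is the $\omega$-dependent projection of $\Omega^\bullet(P)$ onto $\Hor^\bullet P$, and that $D^\omega$ preserves $\Mor(\ad,\Delta_\H)$, being computed leg-wise by $(D^\omega\tau)(\vartheta)=D^\omega(\tau(\vartheta))$. The decisive computation is then to differentiate $\tau(\vartheta)=\mu\otimes\mathbbm{1}$. Since $\Omega^\bullet(P)=\Omega^\bullet(B)\otimes\Gamma^\wedge$ carries the graded tensor-product differential and $d\mathbbm{1}=0$ in $\Gamma^\wedge$, we obtain $d(\mu\otimes\mathbbm{1})=d\mu\otimes\mathbbm{1}$, which already belongs to $\Hor^\bullet P=\Omega^\bullet(B)\otimes H$. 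Because $h_P$ is a projection with image $\Hor^\bullet P$, it acts as the identity on $d\mu\otimes\mathbbm{1}$, so $D^\omega(\tau(\vartheta))=d(\tau(\vartheta))$ for every qpc $\omega$. Equivalently, in the connection-term form of $D^\omega$ the $H$-label of $\tau(\vartheta)$ is $\mathbbm{1}$, and since $\pi(\mathbbm{1})=0$ the correction $\omega(\pi(-))$ vanishes; this is exactly where independence of the chosen gauge potential $A^\omega$ enters.

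Finally I would note that $\vartheta\mapsto d\mu\otimes\mathbbm{1}$ again lies in $\Omega^\bullet(B)\otimes\mathbbm{1}$, so $D^\omega\tau$ is once more an element of $\Mor(\ad,\Delta_\H)$, and conclude that $D^\omega=d$ on $\Mor(\ad,\Delta_\H)$ for every $\omega$. The only genuinely delicate step is aligning the abstract definition of $D^\omega$ in \cite{sald2} with the concrete horizontal/vertical splitting of $\Omega^\bullet(P)$; once the $H$-leg of $\tau(\vartheta)$ is pinned to $\mathbbm{1}$, the vanishing of the connection contribution is essentially forced, and the remaining verifications are routine.
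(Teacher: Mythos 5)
Your proof is correct and, at its core, uses the same mechanism as the paper: the intertwining condition together with $\ad(\vartheta)=\vartheta\otimes\mathbbm{1}$ pins the $H$--leg of $\tau(\vartheta)$ to $\mathbbm{1}$, and since $\pi(\mathbbm{1})=0$ the connection term $\omega(\pi(\cdot))$ in $D^\omega$ vanishes --- which is exactly the paper's one--line argument. Your additional framing via the horizontal projection $h_P\circ d$ and the explicit derivation of $\tau(\vartheta)=\mu\otimes\mathbbm{1}$ are consistent elaborations rather than a different route.
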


\begin{proof}
    According to \cite{micho2,steve}, the covariant derivative of a qpc $\omega$ is the first--order linear map $$D^\omega: \Hor^\bullet P\longrightarrow \Hor^\bullet P $$ given by $D^\omega\psi=d\psi-(-1)^k\psi^{(0)}\omega(\pi(\psi^{(1)}))$ for all $\psi$ $\in$ $\Hor^k P$ with $\Delta_P(\psi)= \psi^{(0)}\otimes\psi^{(1)}$.
    
    Let $\tau$ $\in$ $\Mor(\ad,\Delta_\H)$ such that $\Im(\tau)\subseteq \Hor^k P $. Since $\mathrm{dim}({_\inv}\Gamma)=1$, $\tau$ is completely defined by its value on $\vartheta$; so $$ D^\omega \tau(\vartheta)=d\tau(\vartheta)-(-1)^k \tau(\vartheta)^{(0)}\omega(\pi(\tau(\vartheta)^{(1)}))=d\tau(\vartheta)$$ because  $(\tau\otimes \id_H)\circ\ad=\Delta_\H \circ \tau$ and $\pi(\mathbbm{1})=0$. Hence $D^\omega \tau=d\tau.$ 
\end{proof}

\begin{Remark}
    \label{rema}
    The last proposition agrees with its {\it classical} counterpart: for the trivial bundle $\mathrm{proj}:\R^4\times U(1)\longrightarrow \R^4$, the covariant derivative of every basic form $\tau$ of type $\ad$ (the adjoint action on the Lie algebra $\mathfrak{u}(1)$) is just $d\tau$. 
\end{Remark}

In accordance with  \cite{micho2,steve}, a regular qpc is a qpc for which its covariant derivative satisfies the graded Leibniz rule.   
\begin{Proposition}
\label{prop1}
The trivial qpc is the only regular connection.
\end{Proposition}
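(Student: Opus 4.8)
The plan is to exploit the fact that regularity means the covariant derivative $D^\omega$ is a graded derivation of the horizontal algebra $\Hor^\bullet P=\Omega^\bullet(B)\otimes H$, so it suffices to test the graded Leibniz rule on a generating set and record which gauge potentials $A^\omega$ survive. As generators I would take the forms $\beta\otimes\mathbbm{1}$ with $\beta\in\Omega^\bullet(B)$ together with $\mathbbm{1}\otimes z$ and $\mathbbm{1}\otimes z^{-1}$, since these generate $\Hor^\bullet P$ as a graded algebra. The first preparatory step is therefore to compute the action of $d$ and of the quantum germs map $\pi$ on powers of $z$, using only the structural relation $z\,\pi(z)=-\pi(z)\,z$, i.e.\ $z\vartheta=-\vartheta z$, together with $\pi(z)=-i\vartheta$ and $dz=z\,\pi(z)$.

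The crucial observation is that $z\vartheta=-\vartheta z$ forces the calculus on even powers of $z$ to collapse: one finds $dz=-i\,z\vartheta$ but $dz^2=(dz)z+z(dz)=0$, and correspondingly $\pi(z^2)=S(z^2)\,dz^2=0$, whereas $\pi(z)=-i\vartheta\neq 0$. Feeding this into the formula $D^\omega\psi=d\psi-(-1)^k\psi^{(0)}\,\omega(\pi(\psi^{(1)}))$ from Proposition \ref{cov}, together with $\omega(\vartheta)=A^\omega(\vartheta)\otimes\mathbbm{1}+\mathbbm{1}\otimes\vartheta$ and $\Delta_{\Omega^\bullet(P)}(\mathbbm{1}\otimes z^n)=(\mathbbm{1}\otimes z^n)\otimes z^n$, I expect the two key values $D^\omega(\mathbbm{1}\otimes z)=i\,A^\omega(\vartheta)\otimes z$ and $D^\omega(\mathbbm{1}\otimes z^2)=0$; the boundary terms involving $\mathbbm{1}\otimes z\vartheta$ cancel, leaving only the contribution of the potential.

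With these values the statement falls out by testing the graded Leibniz rule on the product $(\mathbbm{1}\otimes z)(\mathbbm{1}\otimes z)=\mathbbm{1}\otimes z^2$. Since $\mathbbm{1}\otimes z$ is horizontal of degree zero, regularity would demand $D^\omega(\mathbbm{1}\otimes z^2)=D^\omega(\mathbbm{1}\otimes z)\,(\mathbbm{1}\otimes z)+(\mathbbm{1}\otimes z)\,D^\omega(\mathbbm{1}\otimes z)$, that is $0=2i\,A^\omega(\vartheta)\otimes z^2$. As $\Omega^\bullet(B)\otimes H$ is a free right $H$--module this forces $A^\omega(\vartheta)=0$, and since ${_\inv}\Gamma$ is one--dimensional this means $A^\omega=0$, the trivial qpc. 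Conversely I would check that the trivial qpc is genuinely regular by verifying the graded Leibniz rule on the same generators: there $D^\omega(\mathbbm{1}\otimes z^{\pm1})=0$ and $D^\omega(\beta\otimes\mathbbm{1})=d\beta\otimes\mathbbm{1}$, so the derivation property reduces to the ordinary Leibniz rule for $d$ on $\Omega^\bullet(B)$.

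The main obstacle, and the only genuinely delicate point, is the preparatory computation of $dz^n$ and $\pi(z^n)$: everything hinges on the sign alternation produced by $z\vartheta=-\vartheta z$, which makes odd powers of $z$ carry a non--trivial differential while even powers become $d$--closed with vanishing germ. Once this parity phenomenon is established, the cancellation of the $\mathbbm{1}\otimes z\vartheta$ terms in $D^\omega(\mathbbm{1}\otimes z)$ and the mismatch between $D^\omega(\mathbbm{1}\otimes z^2)=0$ and the Leibniz prediction $2i\,A^\omega(\vartheta)\otimes z^2$ are routine, and uniqueness is immediate. I would also keep an eye on the Koszul signs in the graded tensor product $\Omega^\bullet(B)\otimes\Gamma^\wedge$, but since the elements of $H$ used here all sit in degree zero these signs are trivial in the relevant products.
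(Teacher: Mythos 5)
Your argument is correct, but it takes a more hands-on route than the paper. The paper's proof is essentially a one-liner: it invokes the known reformulation (from Durdevich/Sontz) that on a trivial bundle a regular qpc's potential must satisfy $A^\omega(\theta\circ g)=\epsilon(g)A^\omega(\theta)$, and then evaluates at $g=z$, where the relation $z\,\pi(z)=-\pi(z)\,z$ gives $\pi(z)\circ z=-\pi(z)$ and hence forces $A^\omega=0$. You instead work directly from the definition of regularity used in the paper (the graded Leibniz rule for $D^\omega$), compute $D^\omega(\mathbbm{1}\otimes z)=i\,A^\omega(\vartheta)\otimes z$ and $D^\omega(\mathbbm{1}\otimes z^2)=0$, and exhibit the failure of Leibniz on $(\mathbbm{1}\otimes z)^2$. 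The two proofs hinge on the same algebraic fact --- the anticommutation $z\vartheta=-\vartheta z$ makes $\pi(z^2)=0$ rather than $2\pi(z)$ --- but yours is self-contained and makes the obstruction visible at the level of the covariant derivative, whereas the paper's is shorter at the cost of citing the equivalence between regularity and the compatibility of $A^\omega$ with the right ${_\inv}\Gamma$-module structure. Two small points to tighten: (i) for the converse, checking Leibniz only on generators does not by itself propagate to arbitrary products; the cleaner statement, which your computation already essentially gives, is that for $A^\omega=0$ one has $D^{\omega}=d\otimes\id_H$ on all of $\Hor^\bullet P=\Omega^\bullet(B)\otimes H$, which is manifestly a graded derivation; (ii) the final step needs the (true, but worth saying) fact that $\Omega^1(B)\otimes H$ is free over the monomials $z^n$, so $A^\omega(\vartheta)\otimes z^2=0$ indeed forces $A^\omega(\vartheta)=0$.
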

\begin{proof}
Let us assume that $\omega$ is a regular qpc. Then according to \cite{micho2,steve} its corresponding non--commutative gauge potential has to satisfies $$A^\omega(\theta \circ g)=\epsilon(g)A^\omega(\theta)$$ for all $\theta$ $\in$ ${_{\inv}}\Gamma $ and all $g$ $\in$ $H$, where $$\theta \circ g= \pi(hg-\epsilon(h)g)$$ if $\theta=\pi(h)$ for some $h$ $\in$ $H$. So it is enough to evaluate in $g=z$ to find that this condition is satisfied if and only if $A^\omega=0$.
\end{proof}

Despite the ease of the previous proof, the last proposition is quite important because every {\it classical} principal connection is regular \cite{micho2}. Thus the last result tells that except by the trivial qpc, there is no {\it classical} counterpart of any qpc, i.e., except by the trivial qpc (for which $F^\omega=0$), all results in the rest of this paper will be completely {\it quantum}; they will not have  {\it classical} analogues.

\section{Non--Commutative Electromagnetic Field Theory}

As we have exposed at the beginning of this paper, Maxwell equations in the vacuum come from the Bianchi identity and critical points of the Yang--Mills functional (see Appendix A). In this sections we are going to recreate that process in order to find their {\it non--commutative} counterparts in our quantum bundle. Furthermore, in this section we will present the covariant formulation of our development.

\subsection{Non--Commutative Geometrical (Topological) equation}

In accordance with \cite{micho2,sald2}, every qpc satisfies the {\it non--commutative Bianchi identity}, which is
\begin{equation}
    \label{3.1}
    (D^\omega-S^\omega)R^\omega=\langle \omega, \langle \omega,\omega\rangle\rangle-\langle \langle \omega,\omega\rangle,\omega\rangle,
\end{equation}
The definition of the operator $S^\omega$ will be given in the proof of Proposition \ref{prop1}. The operator $S^\omega$ measures the {\it lack of regularity} of the qpc $\omega$ in the sense that $S^\omega=0$ when $\omega$ is regular. Furthermore, when $\omega$ is multiplicative (\cite{micho2,steve}), the right--hand side of the last equation is equal to $0$. In summary, when $\omega$ is regular and multiplicative, for example, for {\it classical} principal connections, equation \ref{3.1} turns into the well--known {\it classical} Bianchi identity $$D^\omega R^\omega=0.$$ Of course in general $S^\omega \not=0$, $\langle \omega, \langle \omega,\omega\rangle\rangle-\langle \langle \omega,\omega\rangle,\omega\rangle\not=0$ and this makes the {\it quantum} case so interesting.

 For our calculus we have
\begin{Proposition}
    \label{prop1}
    The equation \ref{3.1} in terms of the non--commutative field strength $F^\omega$ is
\begin{equation}
    \label{3.2}
    (d-d^{S^\omega})F^\omega=0,
\end{equation}
where
\begin{equation}
    \label{3.3}
    d^{S^\omega} T: {_{\inv}}\Gamma\longrightarrow \Omega^{k+1}(B)
\end{equation}
is given by $$ d^{S^\omega}T(\vartheta)=i[A^\omega(\vartheta),T(\vartheta)]^\partial:= i(A^\omega(\vartheta)\wedge T(\vartheta)-(-1)^k\, T(\vartheta)\wedge A^\omega(\vartheta))\;\;\;\in\;\;\; \Omega^{k+1}(B)$$ for all linear maps $T: {_{\inv}}\Gamma\longrightarrow \Omega^k(B).$
\end{Proposition}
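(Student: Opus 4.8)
The plan is to evaluate the abstract identity \ref{3.1} on the single generator $\vartheta$ of ${_\inv}\Gamma$ and to read off each of its three ingredients under the identification $R^\omega(\vartheta)=(F^\omega\otimes\id_H)\circ\ad(\vartheta)=F^\omega(\vartheta)\otimes\mathbbm{1}$, which comes from $\ad(\vartheta)=\vartheta\otimes\mathbbm{1}$. The translation mechanism is this: for every $\tau\in\Mor(\ad,\Delta_\H)$ in our bundle the condition $(\tau\otimes\id_H)\circ\ad=\Delta_\H\circ\tau$ together with $\Delta(z^n)=z^n\otimes z^n$ forces $\tau(\vartheta)=T(\vartheta)\otimes\mathbbm{1}$ for a linear map $T\colon{_\inv}\Gamma\to\Omega^\bullet(B)$ (the group leg must be a multiple of $\mathbbm{1}$). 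Hence any equation in $\Hor^\bullet P$ of the shape $(\,\cdot\,)\otimes\mathbbm{1}=0$ is equivalent to $(\,\cdot\,)=0$ in $\Omega^\bullet(B)$, and this is exactly the bijection that will turn the bundle-level identity \ref{3.1} into the base-level statement \ref{3.2}.

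First I would handle the covariant term. Since $R^\omega\in\Mor(\ad,\Delta_\H)$, Proposition \ref{cov} gives $D^\omega R^\omega=dR^\omega$, and because $d$ acts on $\Omega^\bullet(B)\otimes\Gamma^\wedge$ with $d\mathbbm{1}=0$ we obtain $D^\omega R^\omega(\vartheta)=d\big(F^\omega(\vartheta)\otimes\mathbbm{1}\big)=dF^\omega(\vartheta)\otimes\mathbbm{1}$. Next I would dispose of the right-hand side. Using $\langle C,D\rangle(\vartheta)=m\circ(C\otimes D)\circ\Theta(\vartheta)=i\,C(\vartheta)\wedge D(\vartheta)$ with $\Theta(\vartheta)=i\,\vartheta\otimes\vartheta$, both nested brackets collapse to the same associative triple wedge product, $\langle\omega,\langle\omega,\omega\rangle\rangle(\vartheta)=\langle\langle\omega,\omega\rangle,\omega\rangle(\vartheta)=-\,\omega(\vartheta)\wedge\omega(\vartheta)\wedge\omega(\vartheta)$, so their difference vanishes and the whole right-hand side of \ref{3.1} is $0$ for this particular bundle (the one-dimensionality of ${_\inv}\Gamma$ is what makes these triple products associative). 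Thus \ref{3.1} reduces here to $(D^\omega-S^\omega)R^\omega=0$.

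It remains to identify $S^\omega R^\omega$. Here I would first record the definition of $S^\omega$ from \cite{micho2,sald2}, the operator on $\Mor(\ad,\Delta_\H)$ that measures the failure of regularity of $\omega$, and then specialise it. Because ${_\inv}\Gamma$ is one-dimensional with $\Theta(\vartheta)=i\vartheta\otimes\vartheta$ and $\ad(\vartheta)=\vartheta\otimes\mathbbm{1}$, the connection contributes only through its gauge-potential part $A^\omega$, and $S^\omega$ becomes the graded commutator with $A^\omega(\vartheta)$: for $\tau(\vartheta)=T(\vartheta)\otimes\mathbbm{1}$ with $\Im(T)\subseteq\Omega^k(B)$ one finds $S^\omega\tau(\vartheta)=i\big(A^\omega(\vartheta)\wedge T(\vartheta)-(-1)^k T(\vartheta)\wedge A^\omega(\vartheta)\big)\otimes\mathbbm{1}=d^{S^\omega}T(\vartheta)\otimes\mathbbm{1}$. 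Applying this to $\tau=R^\omega$ (so $T=F^\omega$ and $k=2$) gives $S^\omega R^\omega(\vartheta)=d^{S^\omega}F^\omega(\vartheta)\otimes\mathbbm{1}$. Combining the three computations, $(D^\omega-S^\omega)R^\omega(\vartheta)=\big(dF^\omega(\vartheta)-d^{S^\omega}F^\omega(\vartheta)\big)\otimes\mathbbm{1}=0$, which under the identification above is precisely \ref{3.2}.

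The main obstacle is the middle step: writing out the explicit form of $S^\omega$ on this bundle and checking that it collapses to the graded commutator $i[A^\omega(\vartheta),-]^\partial$ with the correct sign and factor of $i$. I would guard against errors with an independent check directly on the base. Setting $A:=A^\omega(\vartheta)$ and $F:=F^\omega(\vartheta)=dA-iA\wedge A$, the graded Leibniz rule gives $dF=-i\big(dA\wedge A-A\wedge dA\big)$, while $i[A,F]^\partial=i\big(A\wedge F-F\wedge A\big)=i\big(A\wedge dA-dA\wedge A\big)$, the cubic terms $A^{\wedge 3}$ cancelling; hence $dF=i[A,F]^\partial$, i.e.\ $(d-d^{S^\omega})F^\omega(\vartheta)=0$. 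This verifies \ref{3.2} directly and fixes all conventions, so that the only substantive work left is to confirm that the abstract $S^\omega R^\omega$ reproduces exactly this commutator term.
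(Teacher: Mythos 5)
Your proposal is correct and follows essentially the same route as the paper's proof: translate everything to the base via $\tau(\vartheta)=T(\vartheta)\otimes\mathbbm{1}$, use Proposition \ref{cov} for the covariant term, observe that the two nested brackets on the right-hand side of \ref{3.1} coincide (so their difference vanishes), and identify $S^\omega$ with the graded commutator $i[A^\omega(\vartheta),-]^\partial$; the only detail you defer --- unwinding the general formula $S^\omega\tau=\langle\omega,\tau\rangle-(-1)^k\langle\tau,\omega\rangle-(-1)^k[\tau,\omega]$ and noting that $[\tau,\omega]=0$ because $\pi(\mathbbm{1})=0$ --- is exactly what the paper writes out. Your closing direct verification that $dF^\omega(\vartheta)=i[A^\omega(\vartheta),F^\omega(\vartheta)]^\partial$ is a sound consistency check and is precisely the observation the paper records separately in Remark \ref{rema4}.
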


\begin{proof}
    The general definition of the operator $S^\omega$ is given by (\cite{micho2,sald2})
    $$S^\omega\circ \tau:=\langle \omega,\tau\rangle -(-1)^k \langle \tau, \omega\rangle -(-1)^k[\tau,\omega]:  {_{\inv}}\Gamma\longrightarrow \Hor^{k+1} P,$$ where $$[\tau,\omega]:=m\circ (\tau\otimes\omega)\circ (\id_{{_\inv}\Gamma}\otimes \pi)\circ \ad$$ and $\tau$ $\in$ $\Mor(\ad,\Delta_\H)$ such that $\Im(\tau)$ $\subseteq$ $\Hor^k P$. By equation \ref{2.15}, $[\tau,\omega]=0$ since $ \pi(\mathbbm{1})=0$ and hence $$S^\omega\circ \tau= \langle \omega,\tau\rangle -(-1)^k \langle \tau, \omega\rangle.$$ According to equation \ref{2.15}, $\tau(\vartheta)=T(\vartheta) \otimes \mathbbm{1}$ for some linear map $T:{_\inv}\Gamma \longrightarrow  \Omega^k(B)$ (and this induces a bijection between $\Mor(\ad,\Delta_\H)$ and linear maps $T:{_\inv}\Gamma\longrightarrow \Omega^\bullet(B)$ \cite{micho2,steve}). In this way we have $$(S^\omega\circ \tau )(\vartheta)= i(A^\omega(\vartheta)\wedge T(\vartheta)-(-1)^k T(\vartheta)\wedge A^\omega(\vartheta))\otimes \mathbbm{1}=i[A^\omega(\vartheta),T(\vartheta)]^\partial \otimes \mathbbm{1}\;\in \; \Omega^{k+1}(B)\otimes \mathbbm{1}.$$ Therefore, the linear map $ S^\omega\circ \tau$ in terms of $T$ 
    $$d^{S^\omega}T:{_\inv}\Gamma \longrightarrow \Omega^{k+1}(B) $$ is determined by
    $$ d^{S^\omega}T(\vartheta)=i[A^\omega(\vartheta),T(\vartheta)]^\partial.$$ Notice that the linear map $T$ associated to the curvature $R^\omega$ is the non--commutative field strength $F^\omega:{_\inv}\Gamma \longrightarrow \Omega^2(B)$.

    On the other hand, by Proposition \ref{cov}, $$D^\omega R^\omega=dR^\omega \qquad \mbox{ and } \qquad dR^\omega(\vartheta)=dF^\omega(\vartheta)\otimes \mathbbm{1}.$$ Thus in terms of $ F^\omega$, the covariant derivative of the curvature is just $dF^\omega$, as in the {\it classical} case.

    Finally, $$(\langle \omega, \langle \omega,\omega\rangle\rangle-\langle \langle \omega,\omega\rangle,\omega\rangle) (\vartheta)=(A^\omega(\vartheta)\cdot A^\omega(\vartheta)\cdot A^\omega(\vartheta)-A^\omega(\vartheta)\cdot A^\omega(\vartheta)\cdot A^\omega(\vartheta))\otimes \mathbbm{1}=0$$ and hence,  equation \ref{3.1} for the non--commutative field strength is the equation \ref{3.2}.
\end{proof}

 Evaluating  equation \ref{3.2} in the element $\vartheta$ we get 
\begin{equation}
    \label{3.4}
    dF^\omega(\vartheta)=i[A^\omega(\vartheta),dF^\omega(\vartheta)] =i [A^\omega(\vartheta),dA^\omega(\vartheta)] \;\in\; \Omega^3(B)
\end{equation}
The reader can compare the last equation with equation \ref{1.2}. 

\begin{Theorem}
    \label{max1}
    The non--commutative Gauss law for magnetism and the non--commutative Faraday equation are 
    \begin{equation}
    \label{3.5}
    \nabla\cdot {\bf H}=\rho^m,
\end{equation}
\begin{equation}
    \label{3.6}
     \nabla\times {\bf D}+\frac{\partial {\bf H}}{\partial x^0}=-{\bf j}^m,
\end{equation}
where 
\begin{equation}
    \label{3.7}
     \rho^m:=i[{\bf B},{\bf A}]
\end{equation}
and
\begin{equation}
    \label{3.8}
     -{\bf j}^m:=i[\phi,{\bf B}]-i({\bf E}\times {\bf A}+{\bf A}\times {\bf E})
\end{equation}
\end{Theorem}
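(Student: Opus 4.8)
The plan is to expand the three--form identity \ref{3.4}, namely $dF^\omega(\vartheta)=i[A^\omega(\vartheta),dA^\omega(\vartheta)]\in\Omega^3(B)$, in the canonical basis $\{dx^1\wedge dx^2\wedge dx^3,\ dx^0\wedge dx^2\wedge dx^3,\ dx^0\wedge dx^1\wedge dx^3,\ dx^0\wedge dx^1\wedge dx^2\}$ of $\Omega^3(B)$, and then to read off the four scalar identities obtained by comparing coefficients. The single purely spatial component will produce \ref{3.5}, while the three components containing $dx^0$ will assemble into the vector equation \ref{3.6}. Note that no independent input is needed beyond \ref{3.4}: since $F^\omega(\vartheta)=dA^\omega(\vartheta)-iA^\omega(\vartheta)\wedge A^\omega(\vartheta)$ and $d^2=0$, one has $dF^\omega(\vartheta)=-i\,d\big(A^\omega(\vartheta)\wedge A^\omega(\vartheta)\big)=i\big(A^\omega(\vartheta)\wedge dA^\omega(\vartheta)-dA^\omega(\vartheta)\wedge A^\omega(\vartheta)\big)$, so the statement is exactly the component form of this single three--form identity.

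First I would compute the left--hand side. Using \ref{2.39}, I write $F^\omega(\vartheta)=D_1\,dx^0\wedge dx^1+D_2\,dx^0\wedge dx^2+D_3\,dx^0\wedge dx^3-H_3\,dx^1\wedge dx^2+H_2\,dx^1\wedge dx^3-H_1\,dx^2\wedge dx^3$ and apply $d$, which is the ordinary de Rham differential (linear, satisfying Leibniz, with each $\partial_\mu$ acting coefficient--wise). Collecting the coefficient of $dx^1\wedge dx^2\wedge dx^3$ gives $-(\partial_1H_1+\partial_2H_2+\partial_3H_3)=-\nabla\cdot\mathbf{H}$, while collecting, for instance, the coefficient of $dx^0\wedge dx^2\wedge dx^3$ gives $\partial_3D_2-\partial_2D_3-\partial_0H_1=-(\nabla\times\mathbf{D})_1-\partial H_1/\partial x^0$; the other two $dx^0$--components are obtained by cyclic relabeling. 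Hence the left--hand side already produces the differential operators appearing on the left of \ref{3.5} and \ref{3.6}, namely $-\nabla\cdot\mathbf{H}$ in the spatial slot and $-(\nabla\times\mathbf{D}+\partial\mathbf{H}/\partial x^0)_k$ in the three $dx^0$--slots.

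Next I would compute the right--hand side $i\big(A^\omega(\vartheta)\wedge dA^\omega(\vartheta)-dA^\omega(\vartheta)\wedge A^\omega(\vartheta)\big)$, where $A^\omega(\vartheta)=\phi\,dx^0-\sum_iA_i\,dx^i$ and, by \ref{2.40}--\ref{2.41}, $dA^\omega(\vartheta)$ has components $\mathcal{F}_{0i}=E_i$ together with $\mathcal{F}_{12}=-B_3$, $\mathcal{F}_{13}=B_2$, $\mathcal{F}_{23}=-B_1$ (i.e. the classical parts $\mathbf{E},\mathbf{B}$). Here I must keep the Moyal products in their correct order, since this is precisely where the statement becomes non--trivial: in the commutative case a one--form and a two--form commute under $\wedge$ and the whole right--hand side vanishes. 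A direct expansion yields, for the spatial component, $i(\mathbf{A}\cdot\mathbf{B}-\mathbf{B}\cdot\mathbf{A})=-i[\mathbf{B},\mathbf{A}]$ (with $\mathbf{A}\cdot\mathbf{B}:=\sum_iA_i\cdot B_i$), and for the $dx^0\wedge dx^2\wedge dx^3$ component $-i[\phi,B_1]+i(\mathbf{E}\times\mathbf{A}+\mathbf{A}\times\mathbf{E})_1$, using the convention of \ref{2.41} that products in the cross products run from top to bottom; the remaining two $dx^0$--components follow by cyclic relabeling.

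Finally, comparing coefficients term by term: the spatial equation gives $-\nabla\cdot\mathbf{H}=-i[\mathbf{B},\mathbf{A}]$, i.e. $\nabla\cdot\mathbf{H}=\rho^m$ with $\rho^m=i[\mathbf{B},\mathbf{A}]$, which is \ref{3.5} and \ref{3.7}; and the three $dx^0$--components give $-(\nabla\times\mathbf{D}+\partial\mathbf{H}/\partial x^0)_k=i[\phi,B_k]-i(\mathbf{E}\times\mathbf{A}+\mathbf{A}\times\mathbf{E})_k$, that is $\nabla\times\mathbf{D}+\partial\mathbf{H}/\partial x^0=-\mathbf{j}^m$ with $-\mathbf{j}^m=i[\phi,\mathbf{B}]-i(\mathbf{E}\times\mathbf{A}+\mathbf{A}\times\mathbf{E})$, which is \ref{3.6} and \ref{3.8}. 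The main obstacle is exactly the sign and ordering bookkeeping of the two middle paragraphs: one must track the antisymmetrization of $\wedge$ in degree three \emph{together} with the non--commutativity of the Moyal product (using that each $\partial_\mu$ is a derivation of $\cdot$), and it is precisely this interplay that turns the naively vanishing classical expression into the non--zero magnetic charge and current densities $\rho^m$ and $\mathbf{j}^m$.
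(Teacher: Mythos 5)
Your proposal is correct and follows essentially the same route as the paper: expand both sides of equation \ref{3.4} in the canonical basis of $\Omega^3(B)$ and compare coefficients, with the left--hand side yielding $\nabla\cdot\mathbf{H}$ and $\nabla\times\mathbf{D}+\partial\mathbf{H}/\partial x^0$ and the commutator term yielding $\rho^m$ and $-\mathbf{j}^m$. Your preliminary observation that $dF^\omega(\vartheta)=i\bigl(A^\omega(\vartheta)\wedge dA^\omega(\vartheta)-dA^\omega(\vartheta)\wedge A^\omega(\vartheta)\bigr)$ follows from $d^2=0$ is consistent with the paper (it is exactly the content of equation \ref{3.4} and of Remark \ref{rema4}) and correctly explains why only the classical parts $\mathbf{E},\mathbf{B}$ enter the source terms.
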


\begin{proof}
    As in the {\it classical} case, a direct calculation in coordinates shows that  $$dF^\omega(\vartheta)=-\nabla\cdot {\bf H}\,dx^1\wedge dx^2 \wedge dx^3-\left(\nabla\times {\bf D}+\frac{\partial {\bf H}}{\partial x^0}\right)_1 \,dx^0\wedge dx^2 \wedge dx^3$$ $$+\left(\nabla\times {\bf D}+\frac{\partial {\bf H}}{\partial x^0}\right)_2 \,dx^0\wedge dx^1 \wedge dx^3-\left(\nabla\times {\bf D}+\frac{\partial {\bf H}}{\partial x^0}\right)_3 \,dx^0\wedge dx^1 \wedge dx^2,$$ where $\displaystyle \left(\nabla\times {\bf D}+\frac{\partial {\bf H}}{\partial x^0}\right)_i$ denotes the $i$--coordinate of the vector $\displaystyle \nabla\times {\bf D}+\frac{\partial {\bf H}}{\partial x^0}$. While  $$d^{S^\omega}F^\omega(\vartheta)=-\rho^m \,dx^1\wedge dx^2 \wedge dx^3 + {\bf j}^m_1 \, dx^0\wedge dx^2 \wedge dx^3 -{\bf j}^m_2 \,dx^0\wedge dx^1 \wedge dx^3+{\bf j}^m_3 \,dx^0\wedge dx^1 \wedge dx^2,$$ where $$\rho^m=i[{\bf B},{\bf A}] \qquad \mbox{ and } \qquad -{\bf j}^m=-({\bf j}^m_1,{\bf j}^m_2,{\bf j}^m_3)=i[\phi,{\bf B}]-i({\bf E}\times {\bf A}+{\bf A}\times {\bf E}).$$ Now the theorem follows from equation \ref{3.4} and the linear independence of the canonical basis $\{dx^\mu\wedge dx^\nu \wedge dx^\sigma \mid 0 \leq \mu < \nu < \sigma \leq 3\}$
\end{proof}

\begin{Remark}
    \label{remax1}
    Following the {\it classical} interpretation, since the term $\rho^m$ is equal to the divergence of the field ${\bf H}$, we can consider that $\rho^m$ is a (non--commutative) magnetic charge density. In the same way, since $-{\bf j}^m$ is equal to the rotational of the field ${\bf D}$ plus the partial of time of the field ${\bf H}$, we can consider that $-{\bf j}^m$ is a (non--commutative) magnetic current density. It is highly worth remembering that we are considering only an electromagnetic field in the vacuum.
\end{Remark}

Again, the definition of the commutators can be deduced from the context. The reader can compare the equations \ref{3.5}, \ref{3.6} with their {\it classical counterparts} in equations \ref{1.7}, \ref{1.8}. Notice that the magnetic charge and current depend on the interaction of $A^\omega(\vartheta)$ with the {\it classical} part of the non--commutative field strength $F^\omega(\vartheta)$ $\in$ $\Omega^2(B)$.

\begin{Remark}
    \label{remax2}
    As in the {\it classical} case, equation \ref{3.4} or equivalently equations \ref{3.5}, \ref{3.6} are satisfied by all non--commutative gauge potential $A^\omega$. In Subsection 3.3 we are going to present an explicit $A^\omega$ for which $\rho^m\not=0$.
    
    Thus if we ask for $$dF^\omega(\vartheta)=0$$ or equivalently $$\nabla\cdot {\bf H}=0, \qquad \nabla\times {\bf D}+\frac{\partial {\bf H}}{\partial x^0}=0$$ in an attempt to imitate the {\it classic} case as in \cite{obs}, that would be just a special case of equation \ref{3.4} and hence not every $A^\omega$ would satisfy those conditions. In other words, $dF^\omega(\vartheta)=0$ cannot be the correct geometrical (topological) equation of our trivial quantum bundle. The right consequence of the Bianchi identity in non--commutative geometry is the equation  \ref{3.4}.
\end{Remark}

\subsection{Non--Commutative Dynamical equation} 

In accordance with \cite{sald2}, a qpc $\omega$ is a Yang--Mills qpc, i.e., a critical point of the non--commutative Yang--Mills functional (the functional that measures the square norm of the curvature of a qpc) if and only if
\begin{equation}
\label{3.18}
\langle \Upsilon_\ad\circ \lambda\,|\,(d^{\nabla^{\omega}_{\ad}\star_\l}-d^{S^{\omega}\star_\l}) R^{\omega}\rangle_\l+\langle \widetilde{\Upsilon}_\ad\circ \widehat{\lambda}\,|\,(d^{\widehat{\nabla}^{\omega}_{\ad}\star_\r}-d^{\widehat{S}^{\omega}\star_\r}) \widehat{R}^{\omega}\rangle_\r=0
\end{equation} 
for all $\lambda$ $\in$ $\Mor^1(\ad, \Delta_\H):=\{ \tau:{_\inv}\Gamma \longrightarrow \Hor^1 P \mid \tau \mbox{ is a linear map such that }(\tau\otimes \id_H)\circ\ad=\Delta_\H \circ \tau\}$. Here $\Upsilon_\ad$ is the canonical isomorphism between $$\Mor(\ad,\Delta_\Hor) \quad \mbox{ and }\quad \Omega^\bullet(B)\otimes_B E^\ad_\l,$$ where $E^\ad_\l:=\Mor(\ad,\Delta_P)$ as left $B$--module is the left associated quantum vector bundle of $\ad$. Moreover $$\langle-\mid -\rangle_\l$$ is the induced (pseudo) inner product defined on $\Omega^\bullet(B)\otimes_B E^\ad_\l$ by the left quantum Hodge (pseudo) inner product on $\Omega^\bullet(B)$ and the canonical hermitian structure on  $E^\ad_\l$ (\cite{sald1}), $$d^{\nabla^{\omega}_{\ad}\star_\l}$$ is the formal adjoint operator of the exterior derivative of the induced quantum linear connection on $E^\ad_\l$ (just as in the {\it classical} case), $$d^{S^{\omega}\star_\l}$$ is the formal adjoint operator of $\Upsilon_\ad \circ S^{\omega} \circ \Upsilon^{-1}_\ad$ and $$(d^{\nabla^{\omega}_{\ad}\star_\l}-d^{S^{\omega}\star_\l}) R^{\omega}:= (d^{\nabla^{\omega}_{\ad}\star_\l}-d^{S^{\omega}\star_\l}) \Upsilon_{\ad}(R^{\omega}).$$

On the other hand, $\widehat{\Upsilon}_\ad$ is the canonical isomorphism between $$\Mor(\ad,\Delta_\Hor)  \quad \mbox{ and }\quad E^\ad_\r \otimes_B \Omega^\bullet(B),$$
where $E^\ad_\r:=\Mor(\ad,\Delta_P)$ as right $B$--module is the right associated quantum vector bundle of $\ad$. Furthermore, $$\langle-\mid -\rangle_\r$$ is the induced (pseudo) inner product defined on $E^\ad_\r \otimes_B \Omega^\bullet(B)$ by the right quantum Hodge (pseudo) inner product on $\Omega^\bullet(B)$ and the canonical hermitian structure on  $E^\ad_\r$ (\cite{sald1}), $$d^{\nabla^{\omega}_{\ad}\star_\r}$$ is the formal adjoint operator of the exterior derivative of the induced quantum linear connection on $E^\ad_\r$ (just as in the {\it classical} case), $$d^{S^{\omega}\star_\r}$$ is the formal adjoint operator of $\widehat{\Upsilon}_\ad \circ S^{\omega} \circ \widehat{\Upsilon}^{-1}_\ad$,  $$(d^{\widehat{\nabla}^{\omega}_{\ad}\star_\r}-d^{\widehat{S}^{\omega}\star_\r}) \widehat{R}^{\omega}:=(d^{\widehat{\nabla}^{\omega}_{\ad}\star_\r}-d^{\widehat{S}^{\omega}\star_\r})  \widehat{\Upsilon}_{\ad}(\widehat{R}^{\omega})$$ and $$\widehat{\lambda}=\ast \circ \lambda \circ \ast, \qquad \widehat{R}^{\omega}=\ast \circ R^{\omega}\circ \ast.$$ For a much detailed explanation the reader can check  \cite{sald1,sald2}. 

If every qpc in the quantum principal bundle is regular, then $S^\omega$ is always zero and hence $d^{S^{\omega}\star_\l}=d^{S^{\omega}\star_\r}=0$. Even more, if the right structure is equivalent to the left one, then both terms of equation \ref{3.18} are equal and since $\langle-\mid -\rangle$ is a (pseudo) inner product, equation \ref{3.18} becomes (since there is no need to use both structures, we will only use the left structure; so there is no need to keep the subscript $\l$)
\begin{equation}
\label{3.18.1}
(d^{\nabla^{\omega}_{\ad}\star}-d^{S^{\omega}\star}) R^{\omega}=0.
\end{equation} 
 In summary, when every qpc is regular and the right structure is equivalent to the left one, for example, for {\it classical} principal bundles, equation \ref{3.18} turns into the well--know {\it classical} Yang--Mills equation $$d^{\nabla^{\omega}_{\ad}\star}R^{\omega}=0.$$ Of course in general, not every qpc is regular and not always the left structure is equivalent to the right one and this makes the {\it quantum} case so interesting.  For this paper, since the quantum principal bundle that we use is the trivial one and the $\ad$ corepresentation is trivial (see equation \ref{2.15}), all the right structure is equivalent to the left one. In addition, by Proposition \ref{prop1} there is only one regular qpc. Therefore the non--commutative Yang--Mills equation that we have to consider is the equation \ref{3.18.1}

\begin{Proposition}
    \label{prop3}
    The equation \ref{3.18.1} in terms of the non--commutative field strength $F^{\omega}$ is
    \begin{equation}
    \label{3.19}
    (d^{\star}-d^{S^\omega\star})F^\omega(\vartheta)=0,
\end{equation}
where $d^{\star}$ is the quantum codifferential (see equation \ref{2.11}) and (in abuse of notation) $d^{S^\omega\star}$ is (now) the formal adjoint operator of $d^{S^\omega}$ (see equation \ref{3.3}) with respect to $\langle-|-\rangle$. Concretely
\begin{equation}
    \label{3.20}
    d^{S^\omega\star}T: {_{\inv}}\Gamma\longrightarrow \Omega^{k}(B)
\end{equation}
is given by
$$d^{S^\omega\star}T(\vartheta)=(-1)^{k}\,i\,\star^{-1}\left([A^\omega(\vartheta),\star T(\vartheta)]^\partial\right),$$
for all linear maps $T:{_{\inv}}\Gamma\longrightarrow \Omega^{k+1}(B)$, i.e., 
\begin{equation}
    \label{adjoint}
    d^{S^\omega\star}=(-1)^{k+1} \star^{-1} \circ\,\, d^{S^\omega} \circ \star.
\end{equation}
\end{Proposition}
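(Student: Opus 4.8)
The plan is to translate \ref{3.18.1} term by term into a statement about the single form $F^\omega(\vartheta)\in\Omega^2(B)$, and then to pin down the explicit shape of the adjoint of $d^{S^\omega}$. First I would handle the operator $d^{\nabla^{\omega}_{\ad}\star}$. The isomorphism $\Upsilon_\ad$ intertwines the exterior derivative of the induced quantum linear connection with the covariant derivative $D^\omega$ on $\Mor(\ad,\Delta_\H)$, and by Proposition~\ref{cov} this covariant derivative is just $d$. Under the bijection $\tau(\vartheta)=T(\vartheta)\otimes\mathbbm{1}$ between $\Mor(\ad,\Delta_\H)$ and linear maps $T\colon{_\inv}\Gamma\to\Omega^\bullet(B)$, and using that $\ad$ is trivial (equation \ref{2.15}), so that $\langle-\mid-\rangle$ on $\Omega^\bullet(B)\otimes_B E^\ad_\l$ reduces (up to the one--dimensional fiber normalization) to the Hodge pseudo inner product on $\Omega^\bullet(B)$, the formal adjoint $d^{\nabla^{\omega}_{\ad}\star}$ becomes the quantum codifferential $d^\star$ of \ref{2.11}. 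Since $R^\omega(\vartheta)=F^\omega(\vartheta)\otimes\mathbbm{1}$, the first summand of \ref{3.18.1} is exactly $d^\star F^\omega(\vartheta)$.

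For the second summand I would invoke the computation that produced \ref{3.2}: there $S^\omega$, read through $\Upsilon_\ad$ and the same bijection, was shown to be precisely the operator $d^{S^\omega}$ of \ref{3.3}. Hence the formal adjoint of $\Upsilon_\ad\circ S^\omega\circ\Upsilon_\ad^{-1}$ is, abusing notation as announced, the formal adjoint $d^{S^\omega\star}$ of $d^{S^\omega}$, and the second summand of \ref{3.18.1} is $d^{S^\omega\star}F^\omega(\vartheta)$. Combining the two reductions collapses \ref{3.18.1} to $(d^\star-d^{S^\omega\star})F^\omega(\vartheta)=0$, which is \ref{3.19}. At this point the whole problem is reduced to computing the formal adjoint, with respect to $\langle-\mid-\rangle$, of the purely algebraic (degree--raising, non--differential) operator $\alpha\mapsto i[A^\omega(\vartheta),\alpha]^\partial$ on $\Omega^\bullet(B)$.

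To obtain the explicit formula \ref{3.20} I would verify that the candidate $(-1)^{k+1}\star^{-1}\circ d^{S^\omega}\circ\star$ of \ref{adjoint} satisfies the defining relation $\langle d^{S^\omega}\alpha\mid\beta\rangle=\langle\alpha\mid d^{S^\omega\star}\beta\rangle$ for $\alpha\in\Omega^k(B)$ and $\beta\in\Omega^{k+1}(B)$. Unfolding both pairings through $\langle-\mid-\rangle=\int_B\langle-,-\rangle\,\dvol$ and the defining property $\hat\mu\cdot(\star\mu)=\langle\hat\mu,\mu\rangle\,\dvol$ of the Hodge operator, the verification reduces to a pointwise wedge identity together with the fact that the Moyal integral $\int_B\,\dvol$ is a graded trace, so that the fixed $1$--form $A^\omega(\vartheta)$ can be transported from one side of a product to the other under the integral. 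This is the non--commutative, Lorentzian analogue of the classical statement that the adjoint of exterior multiplication by a $1$--form is, up to sign and a Hodge conjugation, the corresponding contraction; crucially, because $d^{S^\omega}$ carries no derivatives, no boundary/Stokes term arises and only the trace property of $\int_B$ is needed. Evaluating the resulting operator at $\vartheta$ then yields \ref{3.20}.

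The main obstacle I expect is the sign and degree bookkeeping that fixes the precise constant $(-1)^{k}\,i$ in \ref{3.20}. Several sources of signs must be reconciled at once: the degree--dependent sign $(-1)^{|\cdot|}$ inside the graded commutator $[A^\omega(\vartheta),-]^\partial$ (here applied to the form $\star\beta$ of degree $3-k$, not to $\beta$); the sign in $d^\star=(-1)^{k+1}\star^{-1}d\star$; the Lorentzian value $\star^2=(-1)^{k(4-k)+1}\,\id$, whose extra $-1$ relative to the Euclidean case enters when passing between $\star$ and $\star^{-1}$; and the antilinearity of $\star$ combined with the conjugate--linearity of $\langle-,-\rangle$ in its second slot, which is exactly what lets the final formula involve $A^\omega(\vartheta)$ itself rather than $A^\omega(\vartheta)^\ast$. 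Keeping these consistent across the two degrees $k$ and $3-k$ is the delicate bottleneck; once it is settled, the identity \ref{adjoint}, and hence the explicit expression \ref{3.20}, follow.
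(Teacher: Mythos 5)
Your proposal is correct and follows essentially the same route as the paper: reduce equation \ref{3.18.1} through $\Upsilon_\ad$ using the triviality of $\ad$ and Proposition \ref{cov} to identify the first summand with $d^\star F^\omega(\vartheta)$ and the second with the formal adjoint of $d^{S^\omega}$, then pin down that adjoint explicitly. The only difference is that where the paper dismisses the adjoint computation as ``a large and tedious calculation,'' you actually sketch it --- verifying the candidate $(-1)^{k+1}\star^{-1}\circ d^{S^\omega}\circ\star$ via the Hodge identity and the graded trace property of the Moyal integral, and correctly locating the antilinearity of $\star$ as the reason $A^\omega(\vartheta)$ rather than $A^\omega(\vartheta)^\ast$ appears --- which is a faithful filling-in of the omitted step rather than a different argument.
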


\begin{proof}
    By equations \ref{2.14}, \ref{2.15}, elements $\tau$ $\in$ $\Mor(\ad, \Delta_\H)$ are linear maps $$\tau:{_\inv}\Gamma\longrightarrow \Omega^\bullet(P)$$ such that $\tau(\vartheta)=\eta\otimes \mathbbm{1}$ for some $\eta$ $\in$ $\Omega^\bullet(B)$ and hence 
    \begin{equation*}
    \begin{aligned}
    \Upsilon_\ad: \Mor(\ad, \Delta_\H) &\longrightarrow \Omega^\bullet(B)\otimes_B E^\ad_\l\\
     \tau &\longmapsto \eta\otimes_B T_\mathbbm{1}
    \end{aligned}
    \end{equation*}
    with $T_\mathbbm{1}: {_\inv}\Gamma\longrightarrow P$ the linear map such that $T_\mathbbm{1}(\vartheta)=\mathbbm{1}\otimes \mathbbm{1}$. In particular we have $$\Upsilon_\ad(R^\omega)=F^\omega(\vartheta)\otimes_B T_\mathbbm{1}.$$ 

    On the other hand,  the induced linear connection on $E^\ad_\l$ is given by 
    \begin{equation*}
    \begin{aligned}
    \nabla^\omega_\ad: E^\ad_\l &\longrightarrow \Omega^1(B)\otimes_B E^\ad_\l\\
     T &\longmapsto \Upsilon_\ad \circ D^\omega \circ T
    \end{aligned}
    \end{equation*}
    and its exterior derivative is $d^{\nabla^\omega_\ad}=\Upsilon_\ad \circ D^\omega \circ \Upsilon^{-1}_\ad$. In this way, in accordance with Proposition \ref{cov} we get $$d^{\nabla^\omega_\ad} (\eta \otimes_B T_{\mathbbm{1}})= d\eta\otimes_B T_\mathbbm{1}=dT(\vartheta)\otimes_B T_\mathbbm{1},$$ where $T:{_\inv}\Gamma\longrightarrow \Omega^\bullet(B)$ is the linear map given by $T(\vartheta)=\eta$. This implies that its formal adjoint operator is $$d^{\nabla^\omega_\ad\star} (\eta \otimes_B T_{\mathbbm{1}})=d^\star T(\vartheta)\otimes_B T_\mathbbm{1}.$$ Similarly $$ (\Upsilon_\ad \circ S^\omega \circ \Upsilon^{-1}_\ad)(\eta \otimes_B T_{\mathbbm{1}})= i[A^\omega(\vartheta),\eta]^\partial\otimes_B T_{\mathbbm{1}}=d^{S^\omega}T(\vartheta)\otimes_B T_{\mathbbm{1}}$$ and therefore a large and tedious calculation proves that the formal adjoint operator of $\Upsilon_\ad \circ S^\omega \circ \Upsilon^{-1}_\ad $ is $$d^{S^{\omega}\star}(\eta \otimes_B T_{\mathbbm{1}})= d^{S^\omega \star}T(\vartheta)\otimes_B T_{\mathbbm{1}}$$ where  $$d^{S^\omega\star}T(\vartheta)=(-1)^{k}\,i\,\star^{-1}\left([A^\omega(\vartheta),\star T(\vartheta)]^\partial\right).$$ This shows that equation \ref{3.18.1} is $$0=(d^{\nabla^{\omega}_{\ad}\star}-d^{S^{\omega}\star}) R^{\omega}= (d^{\star}-d^{S^\omega\star})F^\omega(\vartheta)\otimes_B T_{\mathbbm{1}} $$ and in terms of  the non--commutative field strength $F^{\omega}$ we get the equation \ref{3.19}.
\end{proof}

 The reader can compare equation \ref{3.19} with equation \ref{1.3}. It is worth mentioning that as in the {\it classical} case, not every qpc satisfies equation \ref{3.18.1} or equivalently, not every non--commutative gauge potential satisfies equation \ref{3.19}.

\begin{Theorem}
    \label{max2}
    The non--commutative Gauss law and the non--commutative Ampere equation are
    \begin{equation}
    \label{3.21}
    \nabla \cdot {\bf D}=\rho^e,
\end{equation}
\begin{equation}
    \label{3.22}
    \nabla\times {\bf H}-\frac{\partial {\bf D}}{\partial x^0}= {\bf j}^e,
\end{equation}
where
\begin{equation}
    \label{3.23}
    \rho^e=i[{\bf D},{\bf A}^{\ast}],
\end{equation}
and 
\begin{equation}
    \label{3.24}
    {\bf j}^e=i [{\bf D},\phi^{\ast}]-i({\bf H}\times {\bf A}^{\ast}+{\bf A}^{\ast}\times {\bf H}).
\end{equation}
\end{Theorem}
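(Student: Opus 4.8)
The plan is to follow the exact same route that worked for Theorem \ref{max1}, but now applied to equation \ref{3.19} instead of equation \ref{3.4}. By Proposition \ref{prop3}, the dynamical equation reads $(d^{\star}-d^{S^\omega\star})F^\omega(\vartheta)=0$, so the strategy is purely computational: expand both operators in coordinates using the explicit data we already have, namely the matrix $(F_{\mu\nu})$ from equation \ref{2.39} with entries given by the fields $\mathbf{D}$ and $\mathbf{H}$, the Hodge table \ref{2.8}--\ref{2.10}, and the potential $A^\omega(\vartheta)=\phi\,dx^0-A_1\,dx^1-A_2\,dx^2-A_3\,dx^3$ from equation \ref{2.37}.

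First I would compute $d^\star F^\omega(\vartheta)$, which is the genuinely ``classical-looking'' piece. Since $d^\star=(-1)^{k+1}\star^{-1}\circ d\circ\star$ on $\Omega^2(B)$, I would apply $\star$ to $F^\omega(\vartheta)=\sum_{\mu<\nu}F_{\mu\nu}\,dx^\mu\wedge dx^\nu$ using equation \ref{2.9}, then take $d$, then apply $\star^{-1}$. This produces a $1$--form whose $dx^0$--component is (up to the Moyal product replacing pointwise multiplication) the divergence $\nabla\cdot\mathbf{D}$ and whose spatial components assemble into $\nabla\times\mathbf{H}-\partial\mathbf{D}/\partial x^0$. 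This is the formal mirror of the computation that in Theorem \ref{max1} gave $\nabla\cdot\mathbf{H}$ and $\nabla\times\mathbf{D}+\partial\mathbf{H}/\partial x^0$; the only structural difference is that here the Hodge operator swaps the roles of the ``electric'' and ``magnetic'' blocks of $(F_{\mu\nu})$, which is precisely why $\mathbf{D}$ and $\mathbf{H}$ exchange places compared with the topological equation.

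Next I would compute the correction term $d^{S^\omega\star}F^\omega(\vartheta)$, which by equation \ref{adjoint} equals $(-1)^{k+1}\star^{-1}\circ d^{S^\omega}\circ\star$ evaluated on $F^\omega(\vartheta)$. Using the formula $d^{S^\omega}T(\vartheta)=i[A^\omega(\vartheta),T(\vartheta)]^\partial$ from Proposition \ref{prop1}, I would first compute $\star F^\omega(\vartheta)$ (a $2$--form), form the graded commutator with $A^\omega(\vartheta)$ to get a $3$--form, and finally apply $\star^{-1}$ to land back in $\Omega^1(B)$. The heart of the calculation is recognizing that the resulting $1$--form has $dx^0$--component equal to $\rho^e=i[\mathbf{D},\mathbf{A}^\ast]$ and spatial part equal to $\mathbf{j}^e=i[\mathbf{D},\phi^\ast]-i(\mathbf{H}\times\mathbf{A}^\ast+\mathbf{A}^\ast\times\mathbf{H})$. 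I would track carefully how the $\ast$-operation and the order of Moyal multiplication in the commutators produce the $\mathbf{A}^\ast$ and $\phi^\ast$ terms, since these conjugates are the signature of working with the left structure $\star_\l$ and its adjoint.

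Finally, subtracting the two expansions and setting the result to zero, I would match components against the linearly independent basis $\{dx^0,dx^1,dx^2,dx^3\}$ of $\Omega^1(B)$: the $dx^0$--coefficient yields equation \ref{3.21} and the three spatial coefficients assemble into the vector equation \ref{3.22}, exactly as the basis-matching argument closed the proof of Theorem \ref{max1}. The main obstacle I anticipate is purely bookkeeping rather than conceptual: correctly handling the signs from $\star^{-1}$ (recall $\star^2=(-1)^{k(4-k)+1}\,\id$), the graded-commutator signs in $[\,\cdot\,,\cdot\,]^\partial$, and the noncommutativity of the Moyal product, all of which must conspire to place the conjugates $\mathbf{A}^\ast,\phi^\ast$ and the precise $\pm$ signs of equations \ref{3.23}--\ref{3.24} correctly; a single transposition error in the commutator ordering would flip a sign in $\mathbf{j}^e$. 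I would organize the computation so that the $d^\star F^\omega$ piece and the $d^{S^\omega\star}F^\omega$ piece are evaluated independently and only combined at the last step, mirroring the clean two-part structure of the proof of Theorem \ref{max1}.
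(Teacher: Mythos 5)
Your proposal matches the paper's own proof: both compute $d^{\star}F^\omega(\vartheta)$ and $d^{S^\omega\star}F^\omega(\vartheta)$ separately as explicit $1$--forms in coordinates (using the Hodge tables and the graded--commutator formula from Proposition \ref{prop3}) and then extract equations \ref{3.21}--\ref{3.22} by matching coefficients against the linearly independent basis $\{dx^0,dx^1,dx^2,dx^3\}$. The paper likewise leaves the coordinate bookkeeping as a ``direct calculation,'' so there is nothing missing here relative to its argument.
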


\begin{proof}
    As in the {\it classical} case, a direct calculation in coordinates shows that
    \begin{equation*}
        \begin{aligned}
            d^\star F^\omega(\vartheta)&=-\nabla \cdot {\bf D}\,dx^0+\left(\nabla\times {\bf H}-\frac{\partial {\bf D}}{\partial x^0}\right)_1\,dx^1\\
            &+\left(\nabla\times {\bf H}-\frac{\partial {\bf D}}{\partial x^0}\right)_2\,dx^2+\left(\nabla\times {\bf H}-\frac{\partial {\bf D}}{\partial x^0}\right)_3\,dx^3
        \end{aligned}
    \end{equation*}
    where $\displaystyle \left(\nabla\times {\bf H}-\frac{\partial {\bf D}}{\partial x^0}\right)_i$ denotes the $i$--coordinate of the vector $\displaystyle \nabla\times {\bf H}-\frac{\partial {\bf D}}{\partial x^0}$. While $$d^{S^\omega\star}F^\omega(\vartheta)=-\rho^e\,dx^0+{\bf j}^e_1 \,dx^1+{\bf j}^e_2 \,dx^2+{\bf j}^e_3 \,dx^3$$ where  $$\rho^e=i[{\bf D},{\bf A}^{\ast}]\qquad \mbox{ and }\qquad {\bf j}^e=({\bf j}^e_1,{\bf j}^e_2,{\bf j}^e_3)= i [{\bf D},\phi^{\ast}]-i({\bf H}\times {\bf A}^{\ast}+{\bf A}^{\ast}\times {\bf H})$$ with ${\bf A}^\ast=(A^\ast_1,A^\ast_2,A^\ast_3)$. Now the theorem follows from equation \ref{3.19} and the linear independence of the canonical basis $\{dx^0,dx^1,dx^2,dx^3\}$.
\end{proof}

\begin{Remark}
   \label{remax3}
    Following the {\it classical} interpretation, since the term $\rho^e$ is equal to the divergence of the field ${\bf D}$, we can consider that $\rho^e$ is a (non--commutative) electric charge density. In the same way, since ${\bf j}^e$ is equal to the rotational of the field ${\bf H}$ minus the partial of time of the field ${\bf D}$, we can consider that ${\bf j}^e$ is a (non--commutative) electric current density. It is highly worth remembering that we are considering an electromagnetic field in the vacuum.
\end{Remark}

\begin{Remark}
    \label{remax4}
    As in the classical case, equation \ref{3.19} or equivalently equations \ref{3.21}, \ref{3.22} represents critical points of the non--commutative Yang--Mills functional, the functional that measures the square norm of the curvature of a qpc. In the next subsection we are going to present an explicit $A^\omega$ for which $\rho^e\not=0$.

    Thus, if we ask for $$d^{\star_\l}F^\omega(\vartheta)=0$$ or equivalently $$\nabla \cdot {\bf D}=0,\qquad  \nabla\times {\bf H}-\frac{\partial {\bf D}}{\partial x^0}=0$$ in an attempt to imitate the classical case as in \cite{obs}, $A^\omega$ might not even be a critical point of the non--commutative Yang--Mills functional \cite{sald2}. In other words, $d^{\star_\l}F^\omega(\vartheta)=0$ cannot be the correct dynamical equation of our trivial quantum bundle. The correct one is the equation \ref{3.19} because every critical point of the Yang--Mills functional has to satisfies it. 
\end{Remark}

In summary, equations \ref{3.5},\ref{3.6}, \ref{3.21},\ref{3.22} are the {\it non--commutative Maxwell equations in the vacuum} in the quantum space--time $B$. It is worth remarking again that even in the vacuum there are electric/magnetic charges densities as well as magnetic/electric current densities, which are generated by non--trivial self--interactions of $A^\omega$ and not generating by any other field. 

On the other hand, since the divergence of the curl is zero we can find {\it conservation laws} for all cases:
\begin{equation}
    \label{3.25}
    \nabla\cdot{\bf j}^e+\frac{\partial \rho^e}{\partial x^0}=0, \qquad \nabla\cdot{\bf j}^m+\frac{\partial \rho^m}{\partial x^0}=0.
\end{equation}

The presence of $\rho^m$, ${\bf j}^m$, $\rho^e$, ${\bf j}^e$  in the non--commutative Maxwell equations obtained from the general theory of quantum principal bundles presented in \cite{sald1,sald2} by means of the operator $S^\omega$ is one of the two main goals of this paper.  

\subsection{Some Solutions, Remarks and the Covariant Formulation}

Now we are going to present two easy but non--trivial solutions for our new set of Maxwell equations:  a solution for which the non--commutative potential produces a non--zero magnetic charge density and a solution for which the non--commutative potential produces a non--zero electric charge density and non--zero magnetic current density. 

In fact, by taking $\theta^{\mu\nu}=0$ for all $\mu$, $\nu$ except for $\theta^{23}$; let us consider the non--commutative gauge potential 
\begin{equation}
    \label{3.35}
    A^\omega: {_{\inv}}\Gamma\longrightarrow \Omega^1(B)
\end{equation}
defined by $$A^\omega(\vartheta)=x^3\,dx^2+x^1\cdot x^2\,dx^3.$$  In this way
$${\bf E}=(0,0,0),\quad {\bf B}=(1-x^1,x^2,0),$$
$${\bf D}=(0,0,0),\quad {\bf H}=(1-(1-\theta^{23})x^1,x^2,0).$$ Thus the non--commutative Maxwell equations are satisfied:
$$\nabla \cdot {\bf H}=\theta^{23}, \qquad \nabla \cdot {\bf D}=0,$$ $$ \nabla \times {\bf D}+\frac{\partial {\bf H}}{\partial x^0}=0,\qquad \nabla \times {\bf H}-\frac{\partial {\bf D}}{\partial x^0}=0.$$ It is worth mentioning that the magnetic charge density is the real constant $\theta^{23}$. This solution is consistent with the zero slope limit of string theory:  $\theta^{0j}=0$ for $j=1,2,3$ \cite{string}.

On the other hand, if $\theta^{01}=1$, $\theta^{02}\not=0$, $\theta^{12}=0$ and the non--commutative gauge potential
\begin{equation}
    \label{3.35}
    A^\omega: {_{\inv}}\Gamma\longrightarrow \Omega^1(B)
\end{equation}
is given by $$A^\omega(\vartheta)=x^1\cdot x^2\,dx^0-x^0\,dx^1,$$ we have
$${\bf E}=(-1-x^2,-x^1,0),\quad {\bf B}=(0,0,0),$$
$${\bf D}=(-1+\theta^{02}x^1,-x^1,0),\quad {\bf H}=(0,0,0).$$ Thus the non--commutative Maxwell equations are satisfied:
$$\nabla \cdot {\bf H}=0, \qquad \nabla \cdot {\bf D}=\theta^{02},$$ $$ \nabla \times {\bf D}+\frac{\partial {\bf H}}{\partial x^0}=(0,0,-1),\qquad \nabla \times {\bf H}-\frac{\partial {\bf D}}{\partial x^0}=0.$$ Again, it is worth mentioning that the electric charge density is the real constant $\theta^{02}$ and the magnetic current density is the real vector $(0,0,-1)$. 

\begin{Remark}
\label{rema1}
The general theory on which this paper is based (\cite{sald1}, \cite{sald2}) works with qpcs in the most general way. However, there is a special kind of qpc called {\it real} qpc which satisfies $$\omega(\theta^\ast)=\omega(\theta)^\ast  $$ for all $\theta$ $\in$ ${_\inv}\Gamma$.  This kind of qpc is the only one that other papers consider, for example \cite{micho1,micho2,steve}. In  the {\it classical} case only real principal connections have physical meaning; so perhaps in the {\it non--commutative} case only real qpc's have physical meaning as well. 

For our quantum bundle, this condition becomes into $$A^\omega(\theta^\ast)=A^\omega(\theta)^\ast$$ for all $\theta$ $\in$ ${_\inv}\Gamma$.
It is worth mentioning that the explicit solutions presented above come from real qpc's and we have obtained real constants and vectors as charges and currents, what in principle could be measurable quantities. Of course, there are more solutions, these were only two examples.
\end{Remark}

Now we will pass to the tensor index notation with the metric $\eta_{\mu\nu}=\mathrm{diag}(1,-1,-1,-1)$.  Let us consider
\begin{equation}
    \label{3.38}
    A_\mu=(\phi,-{\bf A})
\end{equation}
and then equation \ref{2.39} is given by
\begin{equation}
   \label{3.39}
    F_{\mu\nu}=\partial_\mu A_\nu-\partial_\nu A_\mu-i[A_\mu,A_\nu].
\end{equation}

The geometrical (topological) equation, the equation \ref{3.4}, can be written as  
\begin{equation}
    \label{3.40}
    \partial_\mu \widetilde{F}^{\mu\nu}=J^{m\,\nu} \qquad \mbox{ where }\qquad J^{m\,\nu}=(\rho^m,{\bf -j}^m)=i[A_\mu,\widetilde{F}^{\mu\nu}]=i[A_\mu,\widetilde{F}^{\mu\nu}_{\mathrm{classical}}].
\end{equation}
Here, we are consider that
\begin{equation}
    \label{3.41}
    \widetilde{F}^{\mu\nu}={1\over 2}\epsilon^{\mu\nu\alpha\beta}F_{\alpha\beta},
\end{equation}
is the dual electromagnetic tensor field with $\epsilon^{\mu\nu\alpha\beta}$ the Levi--Civitta symbol and $\widetilde{F}^{\mu\nu}_{\mathrm{classical}}$ is the {\it classical} part of $\widetilde{F}^{\mu\nu}$, i.e., the part of $\widetilde{F}^{\mu\nu}$ only with  terms of ${\bf E}$ and ${\bf B}$. The reader can compare equation \ref{3.40} with its {\it classical} counterpart, the equation $A.14$.

On the other hand, the dynamical equation, the equation \ref{3.19}, can be written as
\begin{equation}
    \label{3.42}
    \partial_\mu F^{\mu\nu}=J^{e\,\nu} \qquad \mbox{ where } \quad F^{\mu\nu}=\eta^{\mu\alpha}\eta^{\nu\beta}F_{\mu\nu} \quad  \mbox{ and } \quad J^{e\,\nu}=(\rho^e,{\bf j}^e)=i[A^\ast_\mu,F^{\mu\nu}]
\end{equation}
with $A^\ast_\mu=(\phi^\ast,-{\bf A}^\ast)$. The reader can compare the last equation with its {\it classical} counterpart, equation $A.15$.  

Notice that the form of both four--currents are like four--currents in {\it classical} non--abelian gauge theories. Conservation laws are given by
\begin{equation}
    \label{3.46}
    \partial_\mu J^{e\,\mu}=0,\qquad \partial_\mu J^{m\,\mu}=0;
\end{equation} 
while solutions in terms of the four--potential in the Lorentz gauge 
\begin{equation}
    \label{3.44}
    \partial_\mu A^{\mu}=0
\end{equation}
are given by
\begin{equation}
    \label{3.45}
    \partial_\mu\partial^\mu A^{\nu}=i[A_\mu,F^{\mu\nu}]+i[A^{\mu},\partial_\mu A^{\nu}].
\end{equation}

\begin{Remark}
    \label{rema2}
    In equation \ref{3.39}, the term $$i[A_\mu,A_\nu] $$ comes from the general definition of the curvature for our quantum bundle (\cite{micho2,steve}) with this specific differential calculus. In other papers, as in  \cite{elec,twisted,ruiz}, this term comes from the non--commutativity of $B$ in the definition of the so called gauge curvature $$i[D_\mu,D_\nu], \qquad \mbox{ where }\qquad D_\mu=\partial_\mu-iA_\mu.$$  However, the change $$\partial_\mu \,\longrightarrow \, D^\omega_\mu=\partial_\mu- i\, A_\mu,$$ has no sense for our development. Since we are in the vacuum, there is no other other field coupled with the electromagnetic field, i.e., there is no reason to use any other associated vector bundle than the one associated to $\ad:{_\inv}\Gamma \longrightarrow {_\inv}\Gamma \otimes H$  (see equation \ref{2.15}) and in accordance with Proposition \ref{cov} $$D^\omega_\mu=\partial_\mu$$ in this associated bundle, as in the classical case (see Remark \ref{rema}).

    If in our quantum bundle we consider the classical algebra of differential forms of $U(1)$, we would get $$F_{\mu\nu}=\partial_\mu A_\nu-\partial_\nu A_\mu$$ because in this case, the only embedded differential is $\Theta=0$, regardless of the quantum base space.  Furthermore, in this situation the geometrical (topological) equation and the dynamical equation of the quantum trivial bundle would be $$dF^\omega=0,\qquad d^{\star_\l}F=0,$$ as in the classical case. Thus, in order to recreate the {\it common} form of the curvature used in the literature (\cite{elec,twisted,ruiz}) we decided to use the $\ast$-FODC $(\Gamma,d)$ given in equation \ref{2.13}. However and just like we have seen in the previous subsections, this turns the geometrical (topological) and the dynamical equations into equations \ref{3.2} and \ref{3.19}.
\end{Remark}    

In the next section we will use another differential calculus for $H$ and as a result we will get two gauge fields, as well as new  geometrical (topological) and dynamical equations.

\begin{Remark}
    \label{rema3.1}
    In the same way, it is worth mentioning that the four--current terms are consequence of the presence of the operator $S^\omega$ in the non--commutative Bianchi identity and in the non--commutative Yang--Mills equation. They do not come from $D^\omega$ as in the classical case for non--abelian theories (see Proposition \ref{cov} and Remark \ref{rema}) . 
\end{Remark}

\begin{Remark}
   \label{rema4}
     In addition to the Remarks \ref{remax2}, \ref{remax4}, another reason to consider that the mathematical formulation presented in this paper (which comes from the general theory showed in \cite{sald2}) is the correct one is the fact that we can obtain equations \ref{3.2} and \ref{3.19} just by performing the correct calculations on $\Omega^\bullet(B)$. Indeed, in order to get the equation \ref{3.2} it is enough to calculate $$dF^\omega$$ and see that it is not zero anymore, it is exactly $d^{S^\omega}F^\omega$.  
     
     On the other hand, let us take the standard electromagnetic action for any non--commutative gauge potential (real or not)
     \begin{equation}
    \label{3.43}
\qS_\YM:=\int_B F^{\mu\nu}\cdot F^{\ast}_{\mu\nu}\; \dvol= \int_B \langle F^{\omega}(\vartheta), F^{\omega}(\vartheta)\rangle \; \dvol=\langle F^{\omega}(\vartheta)\mid F^{\omega}(\vartheta)\rangle.
     \end{equation}
    Then, by considering  $A^\omega \mapsto  A^\omega +z\lambda$ with $z$ $\in$ $\C$ and $\lambda:{_\inv}\Gamma \longrightarrow \Omega^1(M)$ a linear map, a simple and direct calculation shows that $$ \left.\dfrac{\partial}{\partial z}\right|_{z=0}\qS_\YM(A^\omega +z\lambda)=0$$ for all $\lambda$ if and only if $$ (d^{\star_\l}-d^{S^\omega\star_\l})F^\omega(\vartheta)=0.$$  It is worth mentioning that the last action is exactly the one proposed in the \cite{sald2} for this particular quantum bundle. For a real qpc the last action becomes $$\qS_\YM:=\int_B F^{\mu\nu}\cdot F_{\mu\nu}\; \dvol.$$ 
\end{Remark}

In accordance with \cite{twisted}, twisted gauge transformations are symmetries of the last action. Furthermore, the covariant formulation allows to identify that our equations are  covariant under the correct set of Lorentz transformations $\Lambda$, for example, the ones who leads the matrix $(\theta^{\mu\nu})$ invariant  \cite{lorentz}.

\begin{Remark}
    \label{rema4}
    The whole classical electromagnetic field theory (Appendix $A$) can be recovered by considering
    $$\theta^{\mu\nu}\longrightarrow 0,$$ even with the non--standard $1D$ differential calculus of $U(1)$ because by the graded commutativity
 of $\Omega^\bullet(B)$ we would have $F^\omega=dA^\omega$ (see equation \ref{2.36}).
\end{Remark}

Finally, it is worth mentioning that there is a subjective asymmetry in the model: the electric four--current depends on the specific form of the four--potential; while the magnetic four--current does not. This four--current exists only by the presence of the electromagnetic field in the quantum space--time $B$; it does not depend on the form of the four--potential (this is because the non--commutative Bianchi identity is satisfied for every qpc). This asymmetry is resolved in the model presented in the next section.

\section{Two Electromagnetic Gauge Fields}

The purpose of this section is to present a model of the electromagnetic interaction for which there are instantons that are not solutions of the corresponding Yang--Mills equation. 

\subsection{The Quantum Bundle}

For this section we will work with the quantum principal $U(1)$--bundle presented in \ref{2.29}. However, we are going to chance the differential calculus on the bundle.  

Indeed, let us consider $(\Omega^\bullet(B),d,\ast)$ but with a Euclidean metric. For example
$$\displaystyle \langle \sum^3_{\mu=0}f_\mu\,dx^\mu ,\sum^3_{\nu=0}h_\nu\,dx^\nu\rangle^1_\l= \sum^3_{\mu,\nu=0}\delta^{\mu\,\nu} f_\mu\cdot h^\ast_\nu=f_0\cdot h^\ast_0+f_1\cdot h^\ast_1+f_2\cdot h^\ast_2+f_3\cdot h^\ast_3.$$ Of course, this change implies 
\begin{equation}
    \label{b.1}
    \star_\l\mathbbm{1}=\dvol, \;\;\star_\l \dvol=\mathbbm{1},
\end{equation}
\begin{equation}
    \label{b.2}
    \begin{aligned}
    \star_\l dx^0=dx^1\wedge dx^2\wedge dx^3, \quad \star_\l dx^1=-dx^0\wedge dx^2\wedge dx^3,\\
    \star_\l dx^2=dx^0\wedge dx^1\wedge dx^3,\quad \star_\l dx^3=-dx^0\wedge dx^1\wedge dx^2,
    \end{aligned}
\end{equation}
\begin{equation}
    \label{b.3}
    \begin{aligned}
    \star_\l dx^0 \wedge dx^1=dx^2\wedge dx^3, \quad \star_\l dx^0 \wedge dx^2=-dx^1\wedge dx^3, \quad \star_\l dx^0 \wedge dx^3=dx^1\wedge dx^2,\\
    \star_\l dx^1 \wedge dx^2=dx^0\wedge dx^3, \quad \star_\l dx^1 \wedge dx^3=-dx^0\wedge dx^2, \quad \star_\l dx^2 \wedge dx^3=dx^0\wedge dx^1,
    \end{aligned}
\end{equation}
\begin{equation}
    \label{b.4}
    \begin{aligned}
    \star_\l dx^1 \wedge dx^2\wedge dx^3 =-dx^0, \quad \star_\l dx^0 \wedge dx^2 \wedge dx^3=dx^1,\\
    \star_\l dx^0 \wedge dx^1 \wedge dx^3=-dx^2, \quad \star_\l dx^0 \wedge dx^1 \wedge dx^2=dx^3
    \end{aligned}
\end{equation}
and we have $\star^2_\l=(-1)^{k(4-k)}\,\id.$ 

Now, for the new differential calculus on $H$ let us take $q$ $\in$ $\R-\{0,1\}$, $\mathcal{L}=\mathrm{span}_\C\{z,z^{-1}\}$ and its linear dual space $\hat{\mathcal{L}}:=\mathrm{span}_\C\{\theta_-,\theta_+\}$, where $$\theta_-(z)=1,\quad \theta_-(z^{-1})=0,\quad \theta_+(z)=0,\quad \theta_+(z^{-1})=1.$$ The map
\begin{equation*}
\varpi: \Ker(\epsilon)\longrightarrow \hat{\mathcal{L}},\qquad
g\longmapsto \varpi(g),
\end{equation*}
where $\varpi(g)(x)={\mathcal{Q}}(g\otimes x)$ with ${\mathcal{Q}}$ such that ${\mathcal{Q}}(z^m\otimes z^n)=q^{2mn}$ for all $m$, $n$ $\in$ $\Z$; defines 
\begin{equation}
    \label{b.5}
    (\Gamma,d),
\end{equation}
a $\ast$--FODC by means of its space of invariant elements, or equivalently, its (dual) quantum Lie algebra (\cite{libro,woro,steve})
\begin{equation}
    \label{b.6}
    {_{\inv}}\Gamma:=\frac{\Ker(\epsilon)}{\Ker(\varpi)}.
\end{equation}
It is worth remarking that $\mathrm{dim}( {_{\inv}}\Gamma)=2$  and clearly a linear basis of the quantum Lie algebra is given by
\begin{equation}
    \label{b.7}
    \beta:=\{ \vartheta^e:=-i\,\theta_-,\,\vartheta^m:=-i\,\theta_+\}.
\end{equation}
By considering the quantum germs map \cite{steve,woro}
\begin{equation}
    \label{b.8}
    \pi:\Ker(\epsilon)\longrightarrow {_{\inv}}\Gamma
\end{equation}
it is easy to look for relations, such as
\begin{equation}
    \label{b.9}
    \begin{aligned}
    \pi(z^n)&=i(q^{2n}-1)\vartheta^e+i(q^{-2n}-1)\vartheta^m,\\
    \pi(z^{-n})&=i(q^{-2n}-1)\vartheta^e+i(q^{2n}-1)\vartheta^m.
    \end{aligned}
\end{equation}
Also we can calculate the adjoint coaction of $G$ on ${_{\inv}}\Gamma$
\begin{equation}
    \label{b.10}
    \begin{aligned}
\ad: {_{\inv}}\Gamma &\longrightarrow {_{\inv}}\Gamma\otimes G\\
\vartheta &\longmapsto \vartheta\otimes \mathbbm{1}
\end{aligned}
\end{equation}
because of $$\ad(\pi(g))=((\pi\otimes \id)\circ \Ad)(g)=\pi(g)\otimes \mathbbm{1}$$ for all $g$ $\in$ $G$.

Now we have to take the universal differential envelope $\ast$--calculus \cite{micho1,micho2,steve}
\begin{equation}
    \label{b.11}
    (\Gamma^\wedge,d,\ast).
\end{equation}

Notice
\begin{equation}
    \label{b.12}
    \vartheta^{e\ast}=\vartheta^e,\quad \vartheta^{m\ast}=\vartheta^m 
\end{equation}
\begin{equation}
    \label{b.13}
    d\pi(z)=d\pi(z^{-1})=-\frac{(q^2-1)^2}{q^2}(\vartheta^e\vartheta^m+\vartheta^m\vartheta^e),
\end{equation}
\begin{equation}
    \label{b.14}
d\vartheta^e=d\vartheta^m=i(\vartheta^e\vartheta^m+\vartheta^m\vartheta^e).
\end{equation}
The algebra $(\Gamma^\wedge,d,\ast)$ will play the role of {\it quantum} differential forms of $U(1)$ in this section. 

Now let us take the the trivial differential calculus $$ \Omega^\bullet(P):=\Omega^\bullet(B)\otimes \Gamma^\wedge, \;\;\Delta_{\Omega^\bullet(P)}:=\id\otimes \Delta:\Omega^\bullet(P)\longrightarrow \Omega^\bullet(P)\otimes \Gamma^\wedge.$$ For this case, by equations \ref{b.12}, \ref{b.13}, \ref{b.14} we can take the embedded differential 
\begin{equation}
    \label{b.15}
    \Theta: {_{\inv}}\Gamma\longrightarrow {_{\inv}}\Gamma\otimes {_{\inv}}\Gamma
\end{equation}
given by $$\Theta(\vartheta^e)=\Theta(\vartheta^m)= i(\vartheta^e\otimes \vartheta^m+\vartheta^m\otimes\vartheta^e).$$

Following the {\it classical} case, let us consider the non--commutative gauge potential $$A^\omega:{_\inv}\Gamma\longrightarrow \Omega^1(B)$$ given by   
\begin{equation}
\label{b.16}
A^\omega(\vartheta^e)= \phi^e\, dx^0-A^e_1\, dx^1-A^e_2\, dx^2-A^e_3\, dx^3,
\end{equation}
\begin{equation}
\label{b.17}
A^\omega(\vartheta^m)= \phi^m dx^0\,-A^m_1\, dx^1-A^m_2\, dx^2-A^m_3\, dx^3.
\end{equation}
Thus the curvature $$F^\omega=dA^\omega-\langle A^\omega,A^\omega\rangle:{_\inv}\Gamma\longrightarrow \Omega^2(B)$$ in the elements of the basis $\beta$ is
\begin{equation}
    \label{b.18}
    F^\omega(\vartheta^e)=dA^\omega(\vartheta^e)-i A^\omega(\vartheta^e)\wedge A^\omega(\vartheta^m)-i A^\omega(\vartheta^m)\wedge A^\omega(\vartheta^e),
\end{equation}
\begin{equation}
    \label{b.19}
    F^\omega(\vartheta^m)=dA^\omega(\vartheta^m)-i A^\omega(\vartheta^e)\wedge A^\omega(\vartheta^m)-i A^\omega(\vartheta^m)\wedge A^\omega(\vartheta^e).
\end{equation}
In terms of coordinates we get
\begin{equation}
    \label{b.20}
    F^\omega(\vartheta^e)=\sum_{0\leq \mu < \nu \leq 3}F^e_{\mu\nu}\, dx^\mu\wedge dx^\nu\;\; \mathrm{ where }\;\; (F^e_{\mu\nu})=\begin{pmatrix}
0 & D^e_1 & D^e_2 & D^e_3  \\
-D^e_1 & 0 & -H^e_3 & H^e_2 \\
-D^e_2 & H^e_3 & 0 & -H^e_1\\
-D^e_3 & -H^e_2 & H^e_1 & 0
\end{pmatrix},
\end{equation}
\begin{equation}
    \label{b.21}
    {\bf D}^e=(D^e_1,D^e_2,D^e_3):={\bf E}^e+i[\phi^m,{\bf A}^e]+i[\phi^e,{\bf A}^m],\;\quad {\bf E}^e:= -\frac{\partial{\bf A}^e}{\partial x^0}-\nabla \phi^e
\end{equation}
and 
\begin{equation}
    \label{b.22}
    {\bf H}^e=(H^e_1,H^e_2,H^e_3):={\bf B}^e+i {\bf A}^e\times {\bf A}^m + i{\bf A}^m \times {\bf A}^e, \;\quad {\bf B}^e:= \nabla \times {\bf A}^e,
\end{equation}
where ${\bf A}^e=(A^e_1,A^e_2,A^e_3)$ and ${\bf A}^m=(A^m_1,A^m_2,A^m_3)$; while 
\begin{equation}
    \label{b.23}
    F^\omega(\vartheta^m)=\sum_{0\leq \mu < \nu \leq 3}F^m_{\mu\nu}\, dx^\mu\wedge dx^\nu\;\; \mathrm{ where }\;\; (F^m_{\mu\nu})=\begin{pmatrix}
0 & H^m_1 & H^m_2 & H^m_3  \\
-H^m_1 & 0 & -D^m_3 & D^m_2 \\
-H^m_2 & D^m_3 & 0 & -D^m_1\\
-H^m_3 & -D^+_2 & D^m_1 & 0
\end{pmatrix},
\end{equation}
\begin{equation}
    \label{b.24}
    {\bf H}^m=(H^m_1,H^m_2,H^m_3):={\bf B}^m+i[\phi^m,{\bf A}^e]+i[\phi^e,{\bf A}^m],\;\quad {\bf B}^m:= -\frac{\partial{\bf A}^m}{\partial x^0}-\nabla \phi^m
\end{equation}
and 
\begin{equation}
    \label{b.25}
    {\bf D}^m=(D^m_1,D^m_2,D^m_3):={\bf E}^m+i{\bf A}^e\times {\bf A}^m +i {\bf A}^m \times {\bf A}^e, \;\quad {\bf E}^m:= \nabla \times {\bf A}^m.
\end{equation}
The notation chosen is not a coincidence: we will consider that $A^\omega(\vartheta^e)$ is  {\it the non--commutative electric potential $1$--form}; $(F^e_{\mu,\nu}),$ ${\bf D}^e$ and ${\bf H}^e$ are {\it the non--commutative electromagnetic tensor field, the non--commutative electric field and the non--commutative magnetic field} generated by  $A^\omega(\vartheta^e)$, respectively;  and ${\bf E}^e$ and ${\bf B}^e$ are their corresponding {\it classical} parts. In the same way, we will consider that $A^\omega(\vartheta^m)$ is {\it the non--commutative magnetic potential $1$--form}; $(F^m_{\mu,\nu}),$ ${\bf D}^m$ and ${\bf H}^m$ are {\it the non--commutative magnetoelectric tensor field, the non--commutative electric field and the non--commutative magnetic field} generated by  $A^\omega(\vartheta^m)$, respectively; and ${\bf E}^m$ and ${\bf B}^m$ are their corresponding {\it classical} parts. 

\subsection{Non--Commutative Maxwell equations and Instantons}

For this quantum bundle and following the same argumentation of the proof of Proposition \ref{prop1}, the non--commutative Bianchi identity (equation \ref{3.1}) becomes  
\begin{equation}
    \label{b.26}
    (d-d^{S^\omega})F^\omega=\langle A^\omega, \langle A^\omega,A^\omega\rangle\rangle- \langle \langle  A^\omega, A^\omega\rangle,A^\omega \rangle,
\end{equation}
where $d$ is the differential and now the operator
\begin{equation}
    \label{b.27}
    d^{S^\omega}T:{_\inv}\Gamma\longrightarrow \Omega^{k+1}(B)
\end{equation}
is given by
$$ d^{S^\omega}T(\vartheta^e)=d^{S^\omega}T(\vartheta^m)=i\,[A^\omega(\vartheta^e),T(\vartheta^m)]^\partial+i\,[A^\omega(\vartheta^m),T(\vartheta^e)]^\partial\;\;\;\in\;\;\; \Omega^{k+1}(B)$$ for all linear maps $T: {_{\inv}}\Gamma\longrightarrow \Omega^k(B)$.

In the same way, the equation \ref{3.18} becomes 
\begin{equation}
    \label{b.28}
    (d^{\star}-d^{S^\omega\star})F^\omega(\vartheta^e)=(d^{\star}-d^{S^\omega\star})F^\omega(\vartheta^m)=0,
\end{equation}
 where $d^{\star}$ is the quantum codifferential and $d^{S^\omega\star}$ is the formal adjoint operator of $d^{S^\omega}$, which is now given by
 $$d^{S^\omega\star}T(\vartheta^e)=(-1)^{k}\,i\,\star^{-1}\left([A^\omega(\vartheta^e)+A^\omega(\vartheta^m),\star T(\vartheta^m)]^\partial\right) \;\in\; \Omega^k(B) ,$$ $$d^{S^\omega\star}T(\vartheta^m)=(-1)^{k}\,i\,\star^{-1}\left([A^\omega(\vartheta^e)+A^\omega(\vartheta^m),\star T(\vartheta^e)]^\partial\right) \;\in\; \Omega^k(B),$$ for all linear maps $T:{_{\inv}}\Gamma\longrightarrow \Omega^{k+1}(B)$.

It is worth mentioning that 
\begin{equation}
    \label{b.29}
    d^{S^\omega\star}\not=(-1)^{k+1} \star^{-1} \circ\, d^{S^\omega} \circ \star.
\end{equation}

Direct calculations as in Propositions \ref{max1}, \ref{max2} show that the  {\it Euclidean non--commutative Maxwell equations in the vacuum} for  $A^\omega(\vartheta^e)$ are
\begin{equation}
    \label{b.30}
    \nabla\cdot {\bf H}^e=\rho
\end{equation}

\begin{equation}
    \label{b.31}
    \nabla\times {\bf D}^e+\frac{\partial {\bf H}^e}{\partial x^0}=-{\bf j}.
\end{equation}

\begin{equation}
    \label{b.32}
    \nabla \cdot {\bf D}^e=\rho^e
\end{equation}

\begin{equation}
    \label{b.33}
    \nabla\times {\bf H}^e+\frac{\partial {\bf D}^e}{\partial x^0}= -{\bf j}^e;
\end{equation}
while the {\it Euclidean non--commutative Maxwell equations vacuum} for $A^\omega(\vartheta^m)$ are
\begin{equation}
    \label{b.34}
   \nabla\cdot {\bf D}^m=\rho
\end{equation}

\begin{equation}
    \label{b.35}
    \nabla\times {\bf H}^m+\frac{\partial {\bf D}^m}{\partial x^0}=-{\bf j}
\end{equation}

\begin{equation}
    \label{b.36}
    \nabla \cdot {\bf H}^m=\rho^m
\end{equation}

\begin{equation}
    \label{b.37}
   \nabla\times {\bf D}^m+\frac{\partial {\bf H}^m}{\partial x^0}=-{\bf j}^m,
\end{equation}
where 
\begin{equation}
    \label{b.38}
    \rho=i[{\bf B}^e,{\bf A}^m]+i[{\bf E}^m,{\bf A}^e]
\end{equation}
is the magnetic charge density generated by $A^\omega(\vartheta^e)$ and it also the electric charge density generated by $A^\omega(\vartheta^m)$, 
\begin{equation}
    \label{b.39}
    -{\bf j}=i[\phi^m,{\bf B}^e]+i[\phi^e,{\bf E}^m]-i({\bf E}^e\times{\bf A}^m+{\bf A}^m\times{\bf E}^e+{\bf B}^m\times{\bf A}^e+{\bf A}^e\times{\bf B}^m)
\end{equation}
is the magnetic current density generated by $A^\omega(\vartheta^e)$ and it is also the electric current density generated by $A^\omega(\vartheta^m)$,
\begin{equation}
    \label{b.40}
    \rho^e=i[{\bf A}^{m\ast},{\bf D}^e+{\bf H}^m]\quad \mbox{ and }\quad \rho^m=i[{\bf A}^{e\ast},{\bf D}^e+{\bf H}^m],
\end{equation}
are the electric and magnetic charge density generated by $A^\omega(\vartheta^e)$ and $A^\omega(\vartheta^m)$ respectively, and
\begin{equation}
    \label{b.41}
    -{\bf j}^e=i[{\bf D}^e+{\bf H}^m,\phi^{m\ast}]+i(({\bf H}^e+{\bf D}^m)\times {\bf A}^{m\ast}+{\bf A}^{m\ast}\times ({\bf H}^e+{\bf D}^m)),
\end{equation}
\begin{equation}
    \label{b.42}
    -{\bf j}^m=i[{\bf D}^e+{\bf H}^m,\phi^{e\ast}]+i(({\bf H}^e+{\bf D}^m)\times {\bf A}^{e\ast}+{\bf A}^{e\ast}\times ({\bf H}^e+{\bf D}^m))
\end{equation}
are the electric current density and the magnetic current density generated by $A^\omega(\vartheta^e)$ and $A^\omega(\vartheta^m)$, respectively. As in the previous section, these charges and currents satisfy conservation laws
\begin{equation}
    \label{b.43}
    \nabla\cdot{\bf j}^e+\frac{\partial \rho^e}{\partial x^0}=0, \qquad \nabla\cdot{\bf j}^m+\frac{\partial \rho^m}{\partial x^0}=0,\qquad \nabla\cdot{\bf j}+\frac{\partial \rho}{\partial x^0}=0.
\end{equation}

In summary, for this model critical points of the non--commutative Yang--Mills functional, i.e., solutions of the non--commutative Yang--Mills equation (equation \ref{b.28})  are given by non--commutative gauge potentials $$A^\omega:{_\inv}\Gamma\longrightarrow \Omega^1(B)$$ for which $A^\omega(\vartheta^e)$ satisfies equations \ref{b.32} \ref{b.33} and $A^\omega(\vartheta^m)$ satisfies equations \ref{b.36} \ref{b.37}. Since equations \ref{b.30}, \ref{b.31} and equations \ref{b.34}, \ref{b.35} come from the non--commutative Bianchi identity (equation \ref{b.26}), these equations are satisfy for every $A^\omega$.

It is worth mentioning that Proposition \ref{cov} holds for this model (see equation \ref{b.10}) and hence, the four--current terms come from the operator $S^\omega$, not from $D^\omega$ as in the {\it classical} case for non--abelian theories.

In accordance with \cite{sald2}, we get the Yang--Mills action of the system as 
\begin{equation}
    \label{b.45}
    \qS_\YM=\qS_\YM^e+\qS_\YM^m,
\end{equation}
where
\begin{equation}
    \label{b.46}
    \qS_\YM^e=\int_B F^{e\;\mu\nu}\cdot F^{e\,\ast}_{\mu\nu}\qquad \mbox{ and }\qquad \qS_\YM^m=\int_B F^{m\;\mu\nu}\cdot F^{m\,\ast}_{\mu\nu}.
\end{equation}

\begin{Remark}
    As in the model showed in Section $3$, equations \ref{b.26} and \ref{b.28} have been found by applying the general theory presented in \cite{sald2} on this specific quantum bundle. However they can be found by calculating $dF^\omega$ and by performing the variation on $\qS_\YM$ $$A^\omega \mapsto A^\omega +z \lambda $$  with $z$ $\in$ $\C$ and $\lambda:{_\inv}\Gamma \longrightarrow \Omega^1(M)$ a linear map  and asking for $$ \left.\dfrac{\partial}{\partial z}\right|_{z=0}\qS_\YM(A^\omega +z\lambda)=0$$ for all $\lambda$. Therefore, we conclude that equations \ref{b.26} and \ref{b.28} are the correct geometrical (topological) and dynamical equations for this model.
\end{Remark}

As in the previous section, twisted gauge transformations (\cite{twisted}) and Lorentz transformations that leads the matrix $(\theta^{\mu\nu})$ invariant (\cite{lorentz}) are symmetries of the last action. Furthermore, the whole model is invariant under the following transformation 
\begin{equation}
    \label{b.44}
    A^\omega(\vartheta^e)\;\longleftrightarrow  A^\omega(\vartheta^m).
\end{equation}
In other words 
$${\bf D}^e\longrightarrow {\bf H}^m,\;\; {\bf E}^e\longrightarrow {\bf B}^m, \;\; \qquad {\bf H}^m\longrightarrow {\bf D}^e, \;\; {\bf B}^m\longrightarrow {\bf E}^e $$
$${\bf H}^e\longrightarrow {\bf D}^m, \;\; {\bf B}^e\longrightarrow {\bf E}^m, \;\;\qquad {\bf D}^m\longrightarrow {\bf H}^e, \;\; {\bf E}^m\longrightarrow {\bf B}^e, $$
$$\rho^e \longrightarrow \rho^m,\;\; {\bf J}^e \longrightarrow {\bf J}^m,\;\;\qquad \;\; \rho^m \longrightarrow \rho^e,\;\; {\bf J}^m \longrightarrow {\bf J}^e. $$  This symmetry produces a kind of {\it non--commutative} dual transformations and it comes from the interchanging of the order basis $\beta=\{\vartheta^e,\vartheta^m\}\,\longleftrightarrow\, -\beta=\{\vartheta^m,\vartheta^e\}$; so it is a $\Z_2$--symmetry. In other words, there is full symmetry between the electric field and the magnetic field in non--commutative Maxwell equations. 

The whole magnetic field of the model is given by ${\bf H}:={\bf H}^e+{\bf H}^m$ and the whole electric field of the model is ${\bf D}:={\bf D}^e+{\bf D}^m$ and now the subjective asymmetry described at the end of the previous sections is resolved: both four--currents have a part that depend on the form of the potential and a part that is present only by the existence of the fields in $B$.

The following proposition is the second main goal of this paper; however, by the form of equations \ref{b.26}, \ref{b.28} and \ref{b.29} it should be not surprising.

\begin{Proposition}
    \label{instantons}
    There are instantons that are no solutions of the non--commutative Yang--Mills equation. In our context, an instanton is a non--commutative gauge potential $$A^\omega:{_\inv}\Gamma\longrightarrow \Omega^1(B) $$ for which $$\star_\l F^\omega=-F^\omega.$$ Since $\beta=\{\vartheta^e, \vartheta^m\}$ is basis of ${_\inv}\Gamma$, $A^\omega$ is an instanton if and only if $\star_\l F^\omega(\vartheta^e)=-F^\omega(\vartheta^e)$ and $\star_\l F^\omega(\vartheta^m)=-F^\omega(\vartheta^m)$. Equivalently ${\bf D}^e={\bf H}^e$ and ${\bf D}^m={\bf H}^m$.
\end{Proposition}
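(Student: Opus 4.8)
The plan is to prove the statement in two stages: first produce at least one instanton, and then show that this particular instanton violates the Yang--Mills equation \ref{b.28}. I would begin by unpacking the self--duality condition $\star_\l F^\omega=-F^\omega$ into the two componentwise conditions $\star_\l F^\omega(\vartheta^e)=-F^\omega(\vartheta^e)$ and $\star_\l F^\omega(\vartheta^m)=-F^\omega(\vartheta^m)$, and then applying the Euclidean Hodge operator (equations \ref{b.1}--\ref{b.4}) to the coordinate expansions \ref{b.20} and \ref{b.23}. Since $\star_\l$ interchanges the $dx^0\wedge dx^i$ slots (which carry ${\bf D}^e$ in $F^\omega(\vartheta^e)$ and ${\bf H}^m$ in $F^\omega(\vartheta^m)$) with the $dx^i\wedge dx^j$ slots (which carry ${\bf H}^e$ and ${\bf D}^m$), the anti--self--duality condition collapses exactly to ${\bf D}^e={\bf H}^e$ and ${\bf D}^m={\bf H}^m$, as asserted in the statement.

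For existence I would exhibit an explicit instanton by the same method used for the explicit solutions in Section $3$: choosing a convenient $\theta^{\mu\nu}$ and low--degree polynomial potentials $A^\omega(\vartheta^e)$, $A^\omega(\vartheta^m)$, and then solving ${\bf D}^e={\bf H}^e$ and ${\bf D}^m={\bf H}^m$ directly from the definitions \ref{b.21}, \ref{b.22}, \ref{b.24}, \ref{b.25}. The \emph{quantum} self--interaction terms, such as $i{\bf A}^e\times{\bf A}^m$ and $i[\phi^m,{\bf A}^e]$, provide enough freedom to balance the \emph{classical} parts ${\bf E}^e,{\bf B}^e$ (and ${\bf E}^m,{\bf B}^m$) against each other, so that a genuinely non--zero anti--self--dual $F^\omega$ is produced rather than the trivial $F^\omega=0$.

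Finally, to see that such an instanton need not satisfy \ref{b.28}, I would contrast it with the classical mechanism. On an anti--self--dual field one has $d^{\star}F^\omega(\vartheta^e)=-\star^{-1}dF^\omega(\vartheta^e)$, and substituting the non--commutative Bianchi identity \ref{b.26} rewrites this as $-\star^{-1}\big(d^{S^\omega}F^\omega(\vartheta^e)+C_e\big)$, where $C_e$ is the (generally non--zero) $\vartheta^e$--component of the cubic right--hand side of \ref{b.26}. Hence
\begin{equation*}
(d^{\star}-d^{S^\omega\star})F^\omega(\vartheta^e)=-\star^{-1}C_e-\big(d^{S^\omega\star}F^\omega(\vartheta^e)+\star^{-1}d^{S^\omega}F^\omega(\vartheta^e)\big).
\end{equation*}
Had the clean adjoint relation \ref{adjoint} held (as it does for the calculus of Section $3$), the bracketed term would cancel on an anti--self--dual field; here, however, equation \ref{b.29} asserts precisely that $d^{S^\omega\star}\neq(-1)^{k+1}\star^{-1}\circ d^{S^\omega}\circ\star$, so the bracket survives and, together with $C_e$, the whole expression is generically non--zero. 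The cleanest way to finish is to evaluate this obstruction on the explicit instanton of the previous step: I would compute the divergence $\nabla\cdot{\bf D}^e$ and the electric source $\rho^e=i[{\bf A}^{m\ast},{\bf D}^e+{\bf H}^m]$ of \ref{b.40} separately, and exhibit that $\nabla\cdot{\bf D}^e\neq\rho^e$, so that the Gauss law \ref{b.32} --- equivalently $(d^{\star}-d^{S^\omega\star})F^\omega(\vartheta^e)=0$ --- fails. I expect the main obstacle to be the bookkeeping in this last step: the potentials must be chosen so that the two requirements do not accidentally conspire, that is, so that self--duality holds while the mismatch forced by \ref{b.29} stays non--zero. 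The structural identity displayed above is what guarantees that this is the typical situation rather than an exceptional one.
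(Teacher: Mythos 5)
Your overall strategy is the same as the paper's: reduce anti--self--duality to ${\bf D}^e={\bf H}^e$, ${\bf D}^m={\bf H}^m$, exhibit an explicit polynomial instanton for a suitable $\theta^{\mu\nu}$, and then verify directly that one of the component Maxwell equations of \ref{b.28} fails. Your structural identity explaining \emph{why} failure is to be expected --- the survival of the bracket $d^{S^\omega\star}F^\omega+\star^{-1}d^{S^\omega}F^\omega$ precisely because \ref{b.29} denies the adjoint relation \ref{adjoint} --- is correct and is consistent with the paper's own remarks before the proposition and in the concluding section (equation \ref{dual}). However, the entire substantive content of the paper's proof is the explicit example, and your proposal stops short of producing one; as written it establishes only that failure is ``generic,'' not that it occurs. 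The paper's example is also simpler than your plan anticipates: it takes $\phi^m=0$, ${\bf A}^m=(0,0,0)$, so all the quantum self--interaction terms in $F^\omega$ vanish and anti--self--duality is achieved purely classically, via ${\bf E}^e={\bf B}^e=(4x^1,-2x^2,-2x^3)$ with ${\bf D}^m={\bf H}^m=0$; there is no balancing of classical against quantum parts. Finally, for that example your proposed falsification target is the wrong component: since ${\bf A}^m=0$ one has $\rho^e=i[{\bf A}^{m\ast},{\bf D}^e+{\bf H}^m]=0$ and $\nabla\cdot{\bf D}^e=-\Delta\phi^e=0$, so the Gauss law \ref{b.32} is \emph{satisfied}. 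The equation that fails is \ref{b.36}: $\nabla\cdot{\bf H}^m=0$ while $\rho^m=i[{\bf A}^{e\ast},{\bf D}^e]\neq0$ under the Moyal product when some spatial $\theta^{\mu\nu}\neq0$. So either construct an instanton for which \ref{b.32} genuinely fails, or redirect the final check to \ref{b.36} as the paper does.
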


\begin{proof}
    Let us consider $\theta^{\mu\nu}\not=0$ for some $\mu,\nu=1,2,3$ and the non--commutative gauge potential $A^\omega$ where $A^\omega(\vartheta^e)$ is given by (see equation \ref{b.16}) $$\phi^e=-2(x^2)^2+(y^2)^2+(z^2)^2,\qquad A^e=(4x^2\cdot x^3,2\,x^1\cdot x^3,6\,x^1\cdot x^2)$$ and $A^\omega(\vartheta^m)$ is given by (see equation \ref{b.17}) $$\phi^m=0,\qquad A^m=(0,0,0).$$ Then $${\bf D}^e={\bf H}^e=(4\,x^1,-2\,x^2,-2\,x^3) \quad \mbox{ and }\quad {\bf H}^m={\bf D}^m=(0,0,0);$$ so $A^\omega$ is an instanton. Now a direct calculation shows that $A^\omega(\vartheta^e)$ satisfies equations \ref{b.30}, \ref{b.31}, \ref{b.32}, \ref{b.33}; however,  $A^\omega(\vartheta^m)$ does not satisfy equation \ref{b.36} and therefore $A^\omega$ is not a solution of the non--commutative Yang--Mills equation.
\end{proof}

Of course, there are instantons that are solutions of the non--commutative Yang--Mills equation, for example, the ones for which $F^\omega=0$.

The model presented in this section is only a {\it mathematical model} in the sense that it does not represent the physical word because there are not two gauge fields associated to the electromagnetic interaction.  As we have mentioned in the introduction of this paper, the using of two electromagnetic gauge fields comes from Cabiboo--Ferrari's idea (\cite{cf,cv}). However, by using the theory of quantum principal bundles we can perform the model with only one $U(1)$ as the gauge group. The importance of this model lies in the full symmetry between the electric field and the magnetic field (equation \ref{b.44}) and Proposition \ref{instantons}. In non--commutative geometry, there are instantons that are not solutions of the corresponding Yang--Mills equation. This opens the possibility to find a model in non--commutative geometry bundle with better description of the physical word for which Proposition \ref{instantons}  holds. 

\section{Concluding Comments}

In Differential Geometry the most general framework of the electromagnetic field theory is given by the theory of principal bundles and principal connections \cite{na}. So it should be natural to think that in the non--commutative case the correct mathematical framework of the electromagnetic field theory is given by the theory of quantum principal bundles and quantum principal connections, and showing this and all its consequences is exactly the purpose of this paper.

As the reader should had already notice throughout the entire text, specially by Remark \ref{rema2}, the general theory of quantum principal bundles reproduces previous results of the non--commutative $U(1)$--gauge theory as well as clarify the correct equation of motion and the correct geometrical (topological) equation. This implies that in order to formulate a non--commutative gauge theory we need to proceed in the most general way, it is not enough with taking the well--known Lagrangians and change the product because that explicit form of the Lagrangians and the equation of motion were found in the {\it classical} context, there is not guarantee that they will work in the non--commutative context, specially because of the operator $S^\omega$. This paper is proof of that.

The operator $S^\omega$ is not zero only in the non--commutative context and for that reason it changes completely the mathematical formulation of these kind of models. It is worth mentioning that this operator appears naturally in the non--commutative Bianchi identity (\cite{micho2,sald2}) and it also appears naturally when you varying the qpc in the non--commutative Yang--Mills functional (\cite{sald2}), so it is essential to consider it in non--commutative gauge theory. In the particular case of the quantum principal bundle used in Sections 2 and 3, the operator $S^\omega$ turns the electromagnetic field into a kind of dyon gauge field in the sense that it produces electric and magnetic charges and currents by itself in the vacuum (see Remark \ref{rema3.1}).

Notice that in the Euclidean case, instantons are still Yang--Mills connections. Indeed, if $A^{\omega}$ is an instanton, by equation \ref{3.2} and the fact that 
\begin{equation}
    \label{dual}
    d^{S^{\omega}\star_\l}=(-1)^{k+1} \star^{-1}_\l \circ \;  d^{S^{\omega}} \circ \star_\l;
\end{equation}
we get $$(d^{\star_\l}-d^{S^{\omega}\star_\l})F^{\omega}= (\star^{-1}_\l \circ \;  (d-d^{S^{\omega}}) \circ \star_\l)F^{\omega}=-\star^{-1}_\l(d-d^{S^\omega})F^{\omega}=0.$$ Instanton has been studied in detail in the non--commutative context, for example in \cite{ins}. Nevertheless,  equation \ref{dual} does not hold in general as in the {\it mathematical model} presented in the Section 4. This automatically implies that instantons as solutions of the corresponding Yang--Mills equation will depend on the specific form of  equations \ref{3.1}, \ref{3.18} in the quantum principal bundle used. All of these are reasons to keep the research going. In particular, it is possible to add scalar matter fields (unfortunately, the general theory presented in \cite{sald2} do not take into account spin matter) to the theory presented in this paper or to develop a theory for the rest of the Lie groups of the standard model. Moreover, the general formulation present in \cite{sald1,sald2} allows to work with other quantum space--time spaces like the $\kappa$--Minkowski space--time \cite{obs}. This space together with the Moya--Weyl algebra are the most studied spaces for quantum gravity in the framework of non--commutative geometry.

\begin{appendix}
\section{Classical Maxwell equations}
Consider a principal $U(1)$--bundle over the Minkowski space--time: $\R^4$ with the metric $\eta=\mathrm{diag}(1,-1,-1,-1)$.

Since $\R^4$ is contractible, every bundle over it is trivializable, so without lose of generality, let us consider the trivial principal bundle
\begin{equation}
\label{1.0}
  \mathrm{proj}: \R^4 \times U(1)\longrightarrow \R^4.  
\end{equation}

 In this case, principal connections $$\omega: T(\R^4\times U(1))\longrightarrow  \mathfrak{u}(1)$$ are in bijection with globally defined $\mathfrak{u}(1)$--valued differential $1$--forms: $i\,A^\omega$, where $i=\sqrt{-1}$ and $$A^\omega=\displaystyle \sum^3_{\mu=0} A_{\mu}\,dx^{\mu}\; \in \; \Omega^1(\R^4)$$  is usually called {\it the potential 1--form}; and the vector  $(A_0,A_1,A_2,A_3)$ created by the coefficients of $A^\omega$ receives the name of {\it four--vector potential}. For this case the curvature form   $$R^\omega: \Omega^2(\R^4\times U(1))\longrightarrow  \mathfrak{u}(1)$$ is related with the potential $1$--form by means of  the (de Rham) differential, i.e., curvature forms are in bijection with 
\begin{equation}
    \label{1.1}
    dA^\omega=:F^\omega \; \in \; \Omega^2(\R^4).
\end{equation}

In general every principal connection has to satisfies the (second) Bianchi identity; however due to the fact that $U(1)$ is an {\it abelian} group, for our case this identity reduces to (\cite{na})
\begin{equation}
    \label{1.2}
    0=dF^\omega=d^2 A^\omega,
\end{equation}
which is a trivial relation taking into account the properties of $d$. Nevertheless it is worth remarking again that the last equation arises from: the relation between the curvature and the $1$--form potential, the Bianchi identity and the commutative of $U(1)$; all of this in the context of the graded--commutative de Rham algebra $\Omega^\bullet(\R^4)$.  

On the other hand, the Yang--Mills functional is a functional from the space of principal connections to $\R$ which measures the squared norm of the curvature of a principal connection and the Yang--Mills equation comes from a variational principle in which we are looking for critical points. For our case and again, by the relation between the curvature and the $1$--form potential and the commutative of $U(1)$, the Yang--Mills equation is (\cite{na})
\begin{equation}
    \label{1.3}
    0=d^\star F^\omega=d^\star dA^\omega,
\end{equation}
where $d^\star=(-1)^k \star^{-1}\circ\, d \,\circ \star$ is the codifferential, the formally adjoint operator of $d$ and $\star$ is the star Hodge operator \cite{na}. 

Now from equations \ref{1.2}, \ref{1.3} it is possible to obtain Maxwell equations \cite{na}. In fact, by choosing $(A_0,A_1,A_2,A_3)=(\phi,-{\bf A})$ with ${\bf A}$ a vector field in $\R^3$, the equation \ref{1.1} becomes into
\begin{equation}
    \label{1.4}
    F^\omega=\sum_{0\leq \mu < \nu \leq 3}F_{\mu\nu}\, dx^\mu\wedge dx^\nu\quad \mathrm{ where }\quad (F_{\mu\nu})=\begin{pmatrix}
0 & E_1 & E_2 & E_3  \\
-E_1 & 0 & -B_3 & B_2 \\
-E_2 & B_3 & 0 & -B_1\\
-E_3 & -B_2 & B_1 & 0
\end{pmatrix},
\end{equation}
where 
\begin{equation}
    \label{1.5}
    {\bf E}=(E_1,E_2,E_3):=-\frac{\partial{\bf A}}{\partial x^0}-\nabla \phi
\end{equation}
is the {\it electric field},
\begin{equation}
    \label{1.6}
    {\bf B}=(B_1,B_2,B_3):=\nabla \times \bf A
\end{equation}
is the {\it magnetic field} and the $2$--form $(F_{\mu,\nu})$ is called {\it the electromagnetic field tensor}. By substituting the value of $F^\omega$ in equation \ref{1.2} we get the Gauss Law for magnetism and the Faraday equation
\begin{equation}
    \label{1.7}
    \nabla\cdot {\bf B}=0,
\end{equation}
\begin{equation}
    \label{1.8}
    \nabla\times {\bf E}+\frac{\partial {\bf B}}{\partial x^0}=0;
\end{equation}
while by substituting it in equation \ref{1.3} we get the Gauss Law and the Ampere equation
\begin{equation}
    \label{1.9}
    \nabla\cdot {\bf E}=0
\end{equation}
\begin{equation}
    \label{1.10}
    \nabla\times {\bf B}-\frac{\partial {\bf E}}{\partial x^0}=0.
\end{equation}
Equations \ref{1.7}--\ref{1.10} are the Maxwell equations in the vacuum \cite{na}. $\phi$ receives the name of {\it the electric potential} or {\it scalar potential}; while ${\bf A}$ receives the name of {\it the magnetic potential} or {\it vector potential}.

In tensor index notation we have
\begin{equation}
    A_\mu=(\phi,-{\bf A})
\end{equation}
and
\begin{equation}
    F_{\mu\nu}=\partial_\mu A_\nu-\partial_\nu A_\mu.
\end{equation}
The geometrical equation (equation \ref{1.2}) can be written as  
\begin{equation}
    \label{1.11}
    \partial_\mu \widetilde{F}^{\mu\nu}=0,
\end{equation}
and the dynamical equation (equation \ref{1.3}) can be written as
\begin{equation}
    \label{1.12}
    \partial_\mu F^{\mu\nu}=0,
\end{equation}
where $$F^{\mu\nu}=\eta^{\mu\alpha}\eta^{\nu\beta}F_{\mu\nu},\qquad \widetilde{F}^{\mu\nu}={1\over 2}\epsilon^{\mu\nu\alpha\beta}F_{\alpha\beta},$$ with $\epsilon^{\mu\nu\alpha\beta}$ the Levi--Civita symbol and $\eta_{\mu\nu}=\eta^{\mu\nu}= \mathrm{diag}(1,-1,-1,-1)$. The tensor $\widetilde{F}_{\mu\nu}$ receives the name of the {dual electromagnetic field tensor}.

It is worth mentioning that equation \ref{1.1} shows explicitly the gauge transformation symmetry: 
\begin{equation}
    \label{1.13}
    A^\omega\longrightarrow {A^\omega}':=A^\omega+d\chi \quad \mbox{ or in tensor index notation } \quad A'_\mu:=A_\mu+\partial_\mu\chi,
\end{equation}
for any $f$ $\in$ $C^\infty(\R^4)$.

\end{appendix}

\end{document}